%%%%%%%%%%%%%%%%%%%%%%%%%%%%%%%%%%%%%%%%%%%%%%%%%%%%
% Based on the file sample_new.tex available from http://css.paperplaza.net/conferences/support/tex.php

\documentclass[letterpaper, 10 pt, conference]{ieeeconf}

\IEEEoverridecommandlockouts                              % This command is only
                                                          			    % needed if you want to
                                                          			    % use the \thanks command
\overrideIEEEmargins
% See the \addtolength command later in the file to balance the column lengths
% on the last page of the document

\newif\ifpdf
\ifx\pdfoutput\undefined
	\pdffalse
\else
	\pdfoutput=1
	\pdftrue
\fi
\ifpdf
	\usepackage{graphicx,graphics,latexsym,verbatim,epsfig}
	\DeclareGraphicsExtensions{.pdf,.jpg,.png}
\else
	\usepackage{graphicx}
	\DeclareGraphicsExtensions{.eps}
\fi

\makeatletter
\def\maxwidth{
  \ifdim\Gin@nat@width>\linewidth
    \linewidth
  \else
    \Gin@nat@width
  \fi
}
\makeatother

\usepackage{placeins}
\usepackage{amsmath}
\usepackage{amsfonts}
\usepackage{amssymb}
\usepackage{multirow}
\usepackage[noadjust]{cite}
\usepackage{fixltx2e}

%------------------------------------------------------------
% Theorem like environments
%
\newtheorem{theorem}{\bf Theorem}

\newtheorem{lemma}{Lemma}

\newcommand{\refe}[1]{(\ref{#1})}
%--------------------------------------------------------

\title{\LARGE \bf
Rearranging trees for robust consensus
}

\author{George Forrest Young, Luca Scardovi and Naomi Ehrich Leonard% <-this % stops a space
\thanks{This research was supported in part by AFOSR grant FA9550-07-1-0-0528 and ONR grant N00014-09-1-1074. }
 \thanks{G. F. Young and N. E. Leonard are with the Department of Mechanical and Aerospace Engineering,
        Princeton University, Princeton, NJ 08544, USA. L. Scardovi is with the Department of Electrical Engineering and Information Technology, Technical University of Munich, 80333 Munich, Germany. 
        {\tt\small gfyoung@princeton.edu},  {\tt\small scardovi@tum.de}, {\tt\small naomi@princeton.edu}}%
}

\begin{document}

\maketitle
\thispagestyle{empty}
\pagestyle{empty}

%%%%%%%%%%%%%%%%%%%%%%%%%%%%%%%%%%%%%%%%%%%%%%%%%%%%%%%%%%%%%%%%%%%%%%%%%%%%
\begin{abstract}
In this paper, we use the $\mathcal{H}_2$ norm associated with a communication graph to characterize the robustness of consensus to noise. In particular, we restrict our attention to trees and by systematic attention to the effect of local changes in topology, we derive a partial ordering for undirected trees according to the $\mathcal{H}_2$ norm. Our approach for undirected trees provides a constructive method for deriving an ordering for directed trees. Further, our approach suggests a decentralized manner in which trees can be rearranged in order to improve their robustness.
\end{abstract}

%%%%%%%%%%%%%%%%%%%%%%%%%%%%%%%%%%%%%%%%%%%%%%%%%%%%%%%%%%%%%%%%%%%%%%%%%%%%
\section{INTRODUCTION}\label{sec:intro}

The study of linear consensus problems has gained much attention in recent years \cite{OlfatiSaber2004, Moreau2005, Ren2005, Blondel2005}. This attention has stemmed, in part, from the wide range of applications of linear consensus, including collective decision-making \cite{Ren2005}, formation control \cite{Bamieh2008}, sensor fusion \cite{OlfatiSaber2005}, distributed computing \cite{Xiao2007} and the understanding of biological groups \cite{Sumpter2008}. In most of these applications, information passed between agents can be corrupted by noise. It is therefore necessary to understand and characterize the robustness of consensus when noise is present, with the goal of designing systems that can efficiently filter noise and remain close to consensus. For a linear system with additive white noise, a natural measure of robustness is the $\mathcal{H}_2$ norm \cite{Young2010}.

For the study of consensus, most of the important details of a multi-agent system are described by the communication graph. In fact, the properties of the Laplacian matrix of the graph are deeply related to the performance of the linear consensus protocol. In this way, the study of consensus often reduces to studying the underlying graph, and relating graph properties to the performance of the original system \cite{Wu2007, OlfatiSaber2004, Ren2005, Scardovi2009}.

Communication in a multi-agent system is likely to be of a directed nature, simply because each agent may treat the information it receives differently than its neighbors. Additionally, directed communication can arise when information is transferred through sensing or when agents have limited capabilities and may choose to only receive information from a subset of possible neighbors.

In many real systems, the graph between agents is not fixed, but may change over time depending on the behavior and decisions of individual agents \cite{OlfatiSaber2004}. Therefore, when searching for ways in which to impose effective graphs on multi-agent systems, it is highly advantageous to consider whether such graphs could be formed in a decentralized manner.

In this paper, we study the robustness of a particular family of graphs, namely trees, according to their $\mathcal{H}_2$ norms. We develop a partial ordering among trees that allows us to find a tree with minimal $\mathcal{H}_2$ norm, given certain constraints. Although most of this partial ordering has already been developed in the literature on Wiener indices \cite{Dobrynin2001, Dong2006, Deng2007, Wang2008a}, our methods of proof are new. In particular, we rely only on local changes in which one or more leaf nodes are moved from a single location in the tree to a new location. This approach provides insight into ways in which trees can be rearranged in a decentralized manner in order to improve their robustness. Additionally, our methods can be used to derive a similar ordering for directed trees that could not be found using the Wiener index literature.

This paper is organized as follows. In Section \ref{sec:prelim} we summarize notation. In Section \ref{sec:H2} we discuss the $\mathcal{H}_2$ norm in more detail. In Sections \ref{sec:resistance} and \ref{sec:indices} we discuss the relationship between the $\mathcal{H}_2$ norm and other graph indices. In Section \ref{sec:treeterm} we introduce a system of terminology to describe tree graphs, and in Section \ref{sec:manipulate} we derive our partial ordering. Finally, in Section \ref{sec:disc}, we discuss the potential for a decentralized algorithm to improve the $\mathcal{H}_2$ norm of a tree.

%%%%%%%%%%%%%%%%%%%%%%%%%%%%%%%%%%%%%%%%%%%%%%%%%%%%%%%%%%%%%%%%%%%%%%%%%%%%
\section{PRELIMINARIES AND NOTATION}\label{sec:prelim}
The state of the system is given by $x = \left[ x_1, x_2, \ldots, x_N \right] \in \mathbb{R}^N$, where $x_i$ is the state of agent $i$. For each agent $i$ we define the set of neighbors, $\mathcal{N}_i$, to be the set of agents which supply information to agent $i$.

We call the state of the system a \emph{consensus} state when $x = \gamma 1_N$, where $1_N = [1,1,\ldots,1]^T \in \mathbb{R}^N$ and $\gamma \in \mathbb{R}$. Let $\Pi$ be the orthogonal projection matrix onto the subspace of $\mathbb{R}^N$ orthogonal to $1_N$. Thus $\Pi = I_N - \frac{1}{N}1_N 1_N^T$, where $I_N$ is the $N$-dimensional identity matrix. The system is in consensus if and only if $\Pi x = 0$.

We associate to the system a \emph{communication graph} $\mathcal{G} = \left( \mathcal{V}, \mathcal{E}, A \right)$, where $\mathcal{V} = \left\{ 1, 2, \ldots, N \right\}$ is the set of nodes, $\mathcal{E} \subseteq \mathcal{V}\times\mathcal{V}$ is the set of edges and $A \in \mathbb{R}^{N\times N}$ is a weighted adjacency matrix with nonnegative entries $a_{i,j}$. $A$ is defined such that $a_{i,j} > 0$ if and only if $\left( i,j \right) \in \mathcal{E}$. Every node in the graph corresponds to an agent in our system, while the graph contains edge $\left(i,j\right)$ when $j \in \mathcal{N}_i$. That is, every directed edge in $\mathcal{G}$ points from an agent receiving information to the agent supplying the information. Then $a_{i,j}$ is the weight given by agent $i$ to the information from agent $j$. Note that according to our definition, $\mathcal{G}$ will contain at most one edge between any ordered pair of nodes and will not contain any self-cycles (edges connecting a node to itself). 

An edge $(i,j) \in \mathcal{E}$ is said to be \emph{undirected} if $(j,i)$ is also in $\mathcal{E}$ and $a_{i,j} = a_{j,i}$. A graph is undirected if every edge is undirected, that is, if $A$ is symmetric. A graph that is not undirected is called directed. Two nodes are said to be \emph{adjacent} when there is an edge between them (i.e. when the corresponding agents communicate with each other).

The \emph{out-degree} of node $k$ is defined as $d_k^{out} = \sum_{j=1}^N{a_{k,j}}$. $\mathcal{G}$ has an associated \emph{Laplacian} matrix $L$, defined by $L = D - A$, where $D = \mathrm{diag}\left(d_1^{out}, d_2^{out}, \ldots, d_N^{out}\right)$ is the diagonal matrix of node out-degrees. The row sums of the Laplacian matrix are zero, that is $L 1_N = 0$. Thus $0$ is always an eigenvalue of $L$ with corresponding eigenvector $1_N$.  Furthermore, all eigenvalues of $L$ have non-negative real part (by Ger\v{s}gorin's Theorem). For an undirected graph, $L$ is symmetric, so in addition $1_N^T L = 0$ and all the eigenvalues of $L$ are real.

A \emph{path} in a  graph $\mathcal{G}$ is a (finite) sequence of nodes containing no repetitions and such that each node is a neighbor of the previous one. The length of a path is given by the sum of the weights on all edges traversed by the path. 

The graph $\mathcal{G}$ is \emph{connected} if it contains a globally reachable node $k$; i.e. there exists a node $k$ such that there is a path in $\mathcal{G}$ from $i$ to $k$ for every node $i$. It can be shown that $0$ will be a simple eigenvalue of $L$ if and only if $\mathcal{G}$ is connected \cite{Mohar1991a}. If an undirected graph is connected, there will be a path in $\mathcal{G}$ between every pair of nodes.

The \emph{distance}, $d_{i,j}$, between two nodes $i$ and $j$ in a graph is the shortest length of any path connecting the nodes. If no such path exists, the distance is infinite. The \emph{diameter}, $d$, of a graph is the maximum distance between all pairs of nodes in the graph. An undirected graph that is connected will have a finite diameter, while a graph that is not connected will have infinite diameter. A directed graph can be connected and have infinite diameter.

We use $\lambda_i$ to refer to the $i^\mathrm{th}$ eigenvalue of the Laplacian matrix, when arranged in ascending order by real part. Thus $\lambda_1 = 0$ for any Laplacian matrix, and $\mathrm{Re}\left\{\lambda_2\right\} > 0$ if and only if $\mathcal{G}$ is connected. For an undirected graph, $\lambda_2$ is referred to as the \emph{algebraic connectivity} \cite{Mohar1991a}.

A \emph{tree} on $N$ nodes is a connected undirected graph in which every pair of nodes is connected by a unique path. This implies that a tree contains exactly $N - 1$ undirected edges and that it contains no cycles (paths with positive length connecting a node to itself). A \emph{rooted} tree is a tree in which one particular node has been identified as the root (note that other than being called the root, there is nothing ``special'' about this node). A \emph{directed tree} is a connected graph containing exactly $N - 1$ directed edges. In a directed tree, the globally reachable node is identified as the root.

The floor function of a real number $x$, denoted $\lfloor x \rfloor$, is the largest integer that is less than or equal to $x$. The ceiling function of a real number $x$, denoted $\lceil x \rceil$, is the smallest integer that is greater than or equal to $x$.

%%%%%%%%%%%%%%%%%%%%%%%%%%%%%%%%%%%%%%%%%%%%%%%%%%%%%%%%%%%%%%%%%%%%%%%%%%%%
\section{ROBUST NOISY CONSENSUS AND THE $\mathcal{H}_2$ NORM}\label{sec:H2}

There are many potential sources of noise in consensus dynamics, including communication errors, spurious measurements and time delays. In this paper we assume that every agent is independently affected by white noise of the same intensity. The resulting dynamics are
\begin{equation}\label{eqn:sysn}
\dot{x}(t) = -Lx(t) + \xi(t)
\end{equation}
with $x \in \mathbb{R}^N$ and where $\xi(t) \in \mathbb{R}^N$ is a random signal with $E[\xi(t)] = 0$, $E[\xi(t)\xi^T(\tau)] = \frac{\alpha}{2}I_N\delta(t-\tau)$ and $E[x(0)\xi^T(\tau)] = 0$. $\delta(t)$ is the Dirac delta function and $\alpha > 0$ is the intensity of the noise.

Since \refe{eqn:sysn} is only marginally stable in the noise-free case (corresponding to the fact that there is no ``preferred'' or ``correct'' value for the agents to agree upon), we only consider the dynamics on the subspace of $\mathbb{R}^N$ orthogonal to the subspace spanned by $1_N$. We let $Q \in \mathbb{R}^{(N-1)\times N}$ be a matrix with rows that form an orthonormal basis of this subspace. This is equivalent to requiring that
\begin{equation}\label{eqn:Q}
\begin{split}
Q 1_N = 0, &\\
Q Q^T = I_{N-1} &\text{ and } Q^T Q = I_N - \frac{1}{N}1_N 1_N^T = \Pi.
\end{split}
\end{equation}
Note that $L Q^T Q = L\left(I_N - \frac{1}{N}1_N 1_N^T \right) = L$, as $L 1_N = 0$. Next, we define $y \mathrel{\mathop:}= Qx$. Then $y = 0$ if and only if $x = \gamma 1_N, \gamma \in \mathbb{R}$.   A measure of the distance from consensus is the \emph{dispersion} of the system $\left|\left|y(t)\right|\right| = \left(y^T(t) y(t)\right)^{\frac{1}{2}}$. Note that the projection of \refe{eqn:sysn} onto the subspace spanned by $1_N$ will give the dynamics of the mean of $x$. These dynamics remain marginally stable (in the noise-free case), and perform a random walk with noise present.

Differentiating $y(t)$, we obtain
\begin{equation}\label{eqn:reducedsys}
\dot{y}(t) = -\bar{L}y(t) + Q\xi(t)
\end{equation}
where $\bar{L} = Q L Q^T$ is the \emph{reduced Laplacian} matrix.

Note that $\bar{L}$ is not unique, since we can compute it using any matrix $Q$ that satisfies \refe{eqn:Q}. However, if $Q$ and $Q^\prime$ both satisfy \refe{eqn:Q}, we can define $P \mathrel{\mathop:}= Q^{\prime}Q^T$. Then $Q^\prime = PQ$ and $P$ is orthogonal. Therefore, if $y^\prime(t) \mathrel{\mathop:}= Q^\prime x(t) = Py(t)$, we have that $y^{\prime T}(t)y^\prime(t) = y^{T}(t)P^{T}Py(t) = y^{T}(t)y(t)$ and thus the dispersion is invariant to the choice of $Q$.

In \cite{Young2010} we demonstrated that $\bar{L}$ has the same eigenvalues as $L$ but the zero eigenvalue, which implies that $-\bar{L}$ is Hurwitz precisely when the graph is connected. Thus, for a connected graph in the absence of noise, system \refe{eqn:reducedsys} will converge exponentially to zero with convergence speed given by $\mathrm{Re}\left\{\lambda_2\right\}$. Throughout the rest of this paper, we will assume that every graph is connected.

In the presence of noise, system \refe{eqn:reducedsys} will no longer converge to zero, but will remain in motion about zero. We therefore define the robustness of consensus to noise as the expected dispersion of the system in steady state. Note that this definition is analogous to the steady-state mean-square deviation used in \cite{Xiao2007}. It turns out that our measure of robustness corresponds to the $\mathcal{H}_2$ norm of system \refe{eqn:reducedsys}, with output equation $z(t) = I_{N-1}y(t)$. A similar approach, of measuring robustness using the $\mathcal{H}_2$ norm, has been taken in \cite{Bamieh2008, Zelazo2009}.

In \cite{Young2010} we proved that for a system with an undirected communication graph\footnote{In fact, the result in \cite{Young2010} is more general and extends to directed graphs with a normality condition on their Laplacian matrix.}, the $\mathcal{H}_2$ norm is given by
\begin{equation}\label{eqn:H2normal}
H = \left(\sum_{i=2}^{N}{\frac{1}{2\lambda_i}}\right)^{\frac{1}{2}}.
\end{equation}

In general, the $\mathcal{H}_2$ norm of a directed graph can be computed as $H = \left[\mathrm{tr}(\Sigma)\right]^{\frac{1}{2}}$, where $\Sigma$ is the solution to the Lyapunov equation \cite{Young2010}
\begin{equation}\label{eqn:lyap}
\bar{L}\Sigma + \Sigma \bar{L}^T = I.
\end{equation}

Since this $\mathcal{H}_2$ norm can be computed entirely from knowledge of the communication graph, in the rest of this paper we associate the $\mathcal{H}_2$ norm with the graph. Thus when we refer to the $\mathcal{H}_2$ norm of a graph, we mean the $\mathcal{H}_2$ norm of system \refe{eqn:reducedsys} (with output $z = y$) with $\bar{L}$ computed from the given graph.

In \cite{Young2010}, we compared a number of directed graphs with respect to their $\mathcal{H}_2$ norms. The main purpose of the present paper is to investigate the robustness associated to the class of undirected tree graphs, with the aim of extending this analysis to directed trees as well. Within this class, the star and path graphs have already been characterized in \cite{Young2010}. In this work we study other trees by investigating their order with respect to the $\mathcal{H}_2$ norm.

%%%%%%%%%%%%%%%%%%%%%%%%%%%%%%%%%%%%%%%%%%%%%%%%%%%%%%%%%%%%%%%%%%%%%%%%%%%%
\section{EFFECTIVE RESISTANCE AS A MEASURE OF THE $\mathcal{H}_2$ NORM}\label{sec:resistance}

Although our formula in equation \refe{eqn:H2normal} allows us to compute the $\mathcal{H}_2$ norm for any undirected graph, it does not readily allow us to infer relationships between structural features of the graph and the $\mathcal{H}_2$ norm. However, the concept of the \emph{effective resistance}, or \emph{Kirchhoff index}, of a graph can help us in this respect.

The effective resistance of a graph can be related to the power dissipated by the graph when it is considered as a resistor network, and also to the expected commute time between any two nodes for a random walk with transition probabilities governed by the edge weights \cite{Ghosh2008}. It is related to the eigenvalues of the graph Laplacian \cite{Xiao2003} by the formula $\displaystyle K_f = N\sum_{i=2}^N{\frac{1}{\lambda_i}}$, leading to the relationship 
\begin{equation}\label{eqn:H2resist}
H = \left(\frac{K_f}{2N}\right)^{\frac{1}{2}}.
\end{equation}
We see, therefore, that the effective resistance is equivalent to the $\mathcal{H}_2$ norm in that one can be computed from the other, and for graphs with equal numbers of nodes, any ordering induced by one measure is the same as the ordering induced by the other.

Conceptually, the effective resistance results from considering a given graph as an electrical network, where every edge corresponds to a resistor with resistance given by the inverse of the edge weight. The resistance between two nodes in the graph is given by the resistance between those two points in the electrical network, and the effective resistance of the graph is given by the sum of the resistances between all pairs of nodes in the graph \cite{Xiao2003}.

One immediate result from this approach is that since adding resistors in parallel to, or decreasing resistances in, an existing network cannot increase the overall resistance, we see that adding edges to, or increasing edge weights in, an existing undirected graph will strictly decrease the effective resistance, and hence the $\mathcal{H}_2$ norm. Therefore, the $\mathcal{H}_2$ norm of a spanning tree of an undirected graph will provide an upper bound on the $\mathcal{H}_2$ norm of the original graph.

Although computing the effective resistance can be difficult for graphs that contain many cycles, it is very straightforward for trees. In a tree, there is precisely one path joining any pair of nodes, so the resistance between them is simply the sum of the resistances of each edge along that path. In a tree with unit weights on every edge, the resistance between two nodes is given by the distance between them \cite{Klein1993}. Hence, the effective resistance of a tree with unit edge weights is given by
\begin{equation}\label{eqn:treeresist}
K_f = \sum_{i < j}{r_{i,j}} = \sum_{i < j}{d_{i,j}}
\end{equation}

Although the concept of effective resistance does not apply to directed graphs, we can define an extension so that equation \refe{eqn:H2resist} applies to directed graphs as well. The resistance between two nodes of an undirected graph can be computed as \cite{Xiao2003}
\begin{equation}
r_{i,j} = (L^\dagger)_{i,i} + (L^\dagger)_{j,j} - 2(L^\dagger)_{i,j}
\end{equation}
where $L^\dagger$ is the Moore-Penrose pseudoinverse of $L$. For an undirected graph, we can explicitly compute $L^\dagger$ as $L^\dagger = Q^T\bar{L}^{-1}Q$. However, the solution to the Lyapunov equation \refe{eqn:lyap} for an undirected graph is $\Sigma = \frac{1}{2}\bar{L}^{-1}$. Therefore, if we let $X = 2Q^T\Sigma Q$, we can write
\begin{equation}\label{eqn:dirresist}
r_{i,j} = (X)_{i,i} + (X)_{j,j} - 2(X)_{i,j}
\end{equation}

Using equation \refe{eqn:dirresist}, we can compute ``directed resistances'' (and hence Kirchhoff indices) for directed graphs. Through this construction, we can show that equation \refe{eqn:H2resist} will hold for directed graphs as well. The advantage of using this approach to compute the $\mathcal{H}_2$ norm of a directed tree is that, like an undirected tree, the resistance between two nodes only depends on the paths between them. The proofs and results for directed trees will appear in a future publication.

Here we use the concept of effective resistance to determine a partial ordering of undirected trees with unit edge weights. The same ordering will apply to the set of trees with a given constant edge weight, as all resistances will be proportional to those in the corresponding tree with unit edge weights.

%%%%%%%%%%%%%%%%%%%%%%%%%%%%%%%%%%%%%%%%%%%%%%%%%%%%%%%%%%%%%%%%%%%%%%%%%%%%
\section{THE $\mathcal{H}_2$ NORM AND OTHER GRAPH INDICES}\label{sec:indices}

In addition to the Kirchhoff index, many other ``topological'' indices of graphs have arisen out of the mathematical chemistry literature \cite{Rouvray1987}. One of the earliest to arise was the Wiener index, $W$ \cite{Rouvray1987}. The Wiener index for any (undirected) graph is defined as
\begin{equation}
W = \sum_{i < j}{d_{i,j}}.
\end{equation}
Thus, for trees (with unit edge weights), the Kirchhoff and Wiener indices are identical. However, the two indices differ for any graph that is not a tree. Therefore, while the results in Section \ref{sec:manipulate} apply equally to Wiener indices of trees, we choose to interpret them only in terms of the Kirchhoff index and $\mathcal{H}_2$ norm.

Much work has already been done on comparing trees based on their Wiener indices. It is already well-known that the Wiener index of a tree will fall between that of the star and that of the path \cite{Entringer1994, Dobrynin2001}. For trees with a fixed number of nodes, the 15 trees with smallest Wiener index and the 17 trees with largest Wiener index have been identified \cite{Dong2006, Deng2007}. Further, for trees with a fixed number of nodes and a fixed diameter, the tree with smallest Wiener index has been found \cite{Wang2008a}. Therefore, most of the main results in Section \ref{sec:manipulate} have already been derived. Our contribution includes new methods of proof that rely on local changes of topology and provide constructive means to order directed trees and derive decentralized strategies for improving robustness.

A different graph index, developed in the mathematical literature, is the maximum eigenvalue of the adjacency matrix $A$ \cite{Simic2007}. Simi\'{c} and Zhou developed a partial ordering of trees with fixed diameter according to this index in \cite{Simic2007}.   Their work has motivated the approach taken in this paper. However, the ordering that we derive in this paper induced by the $\mathcal{H}_2$ norm differs from that induced by the maximum eigenvalue of $A$. While the tree with the smallest $\mathcal{H}_2$ norm for a given diameter (see Theorem \ref{thm:P}) corresponds to the tree with the largest maximum eigenvalue of $A$, the rest of the ordering in \cite{Simic2007} does not always hold in our case. For example, Theorem 6 and Theorem 8 in \cite{Simic2007} demonstrate that the trees we call $P_{N,d,i}$ (see Section \ref{sec:treeterm}) make up at least the first $\left\lfloor\frac{d}{2}\right\rfloor$ trees in a complete ordering. These results do not hold true for the $\mathcal{H}_2$ norm. Furthermore, we extend our comparisons to trees with different diameters.

The algebraic connectivity can also be used as an index to order trees \cite{Grone1990, Yuan2008}. Again, this ordering bears similarities to ours, but the complete ordering is not the same.

%%%%%%%%%%%%%%%%%%%%%%%%%%%%%%%%%%%%%%%%%%%%%%%%%%%%%%%%%%%%%%%%%%%%%%%%%%%%
\section{A SYSTEM OF TERMINOLOGY FOR TREES}\label{sec:treeterm}

We first introduce a system of terminology relating to trees. Much of our terminology corresponds to that in \cite{Simic2007} and earlier papers, although we also introduce some additional terms. $\mathcal{T}_{N,d}$ is the set of all trees containing $N$ nodes and with diameter $d$. For $N \geq 3$, a tree must have $d \geq 2$, and $\mathcal{T}_{N,2}$ contains only one tree. This tree is referred to as a star, and is denoted by $K_{1,N-1}$ (as it contains one ``central'' node which is adjacent to the remaining $N-1$ nodes). For all positive $N$, the maximum diameter of a tree is $N-1$, and $\mathcal{T}_{N,N-1}$ contains only one tree. This tree is referred to as a path, and is denoted by $P_N$.

A \emph{leaf} (or pendant) node is a node with degree $1$. A \emph{bouquet} is a non-empty set of leaf nodes that are all adjacent to the same node. A node which is not a leaf is called an \emph{internal} node.

A \emph{caterpillar} is a tree for which the removal of all leaf nodes would leave a path. The set of all caterpillars with $N$ nodes and diameter $d$ is denoted by $\mathcal{C}_{N,d}$ (see Figure \ref{fig:caterpillar}). Any caterpillar in $\mathcal{C}_{N,d}$ contains a path of length $d$, with all other nodes adjacent to internal nodes of this path. In particular, we refer to the caterpillar that contains a single bouquet attached to the $i^\mathrm{th}$ internal node along this path $P_{N,d,i}$ (see Figure \ref{fig:Pndi}). To avoid ambiguity, we require  $1 \leq i \leq \lfloor \frac{d}{2}\rfloor$. The tree formed from $P_{N-1,d,\lfloor\frac{d}{2}\rfloor}$ by attaching an additional node to one of the leaves in the central bouquet is denoted by $N_{N,d}$ (see Figure \ref{fig:Nnd}). Note that in Figures \ref{fig:Pndi} and \ref{fig:Nnd}, we are only numbering the internal nodes in the longest path, since attaching nodes to either end of this path would create a longer path.

\begin{figure}[thb]
\centering
\includegraphics[width=0.63\maxwidth]{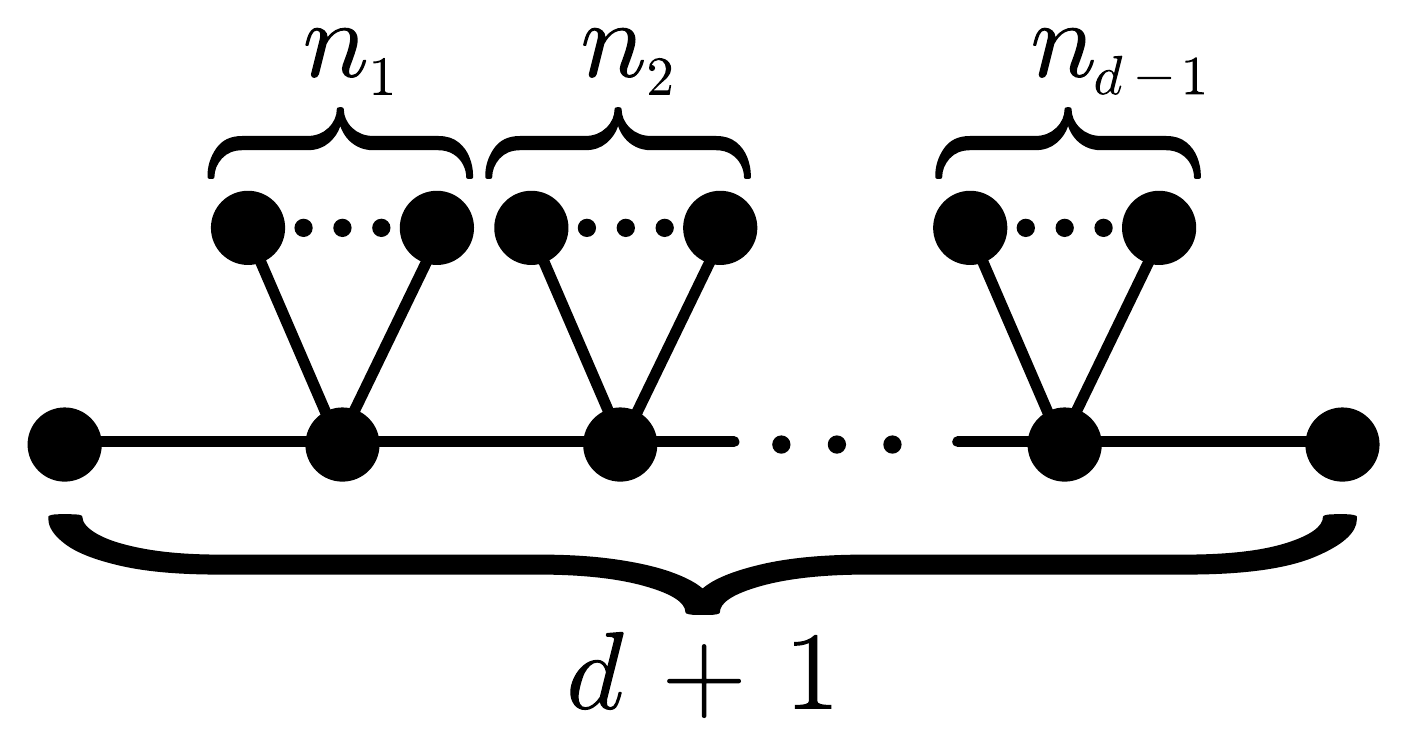}
\caption{General form of a caterpillar in $\mathcal{C}_{N,d}$, with $n_j \geq 0$ additional leaf nodes attached to each internal node $j$ in the path of length $d$.}
\vspace{-0.1 cm}
\label{fig:caterpillar}
\end{figure}

\begin{figure}[thb]
\centering
\includegraphics[width=0.7\maxwidth]{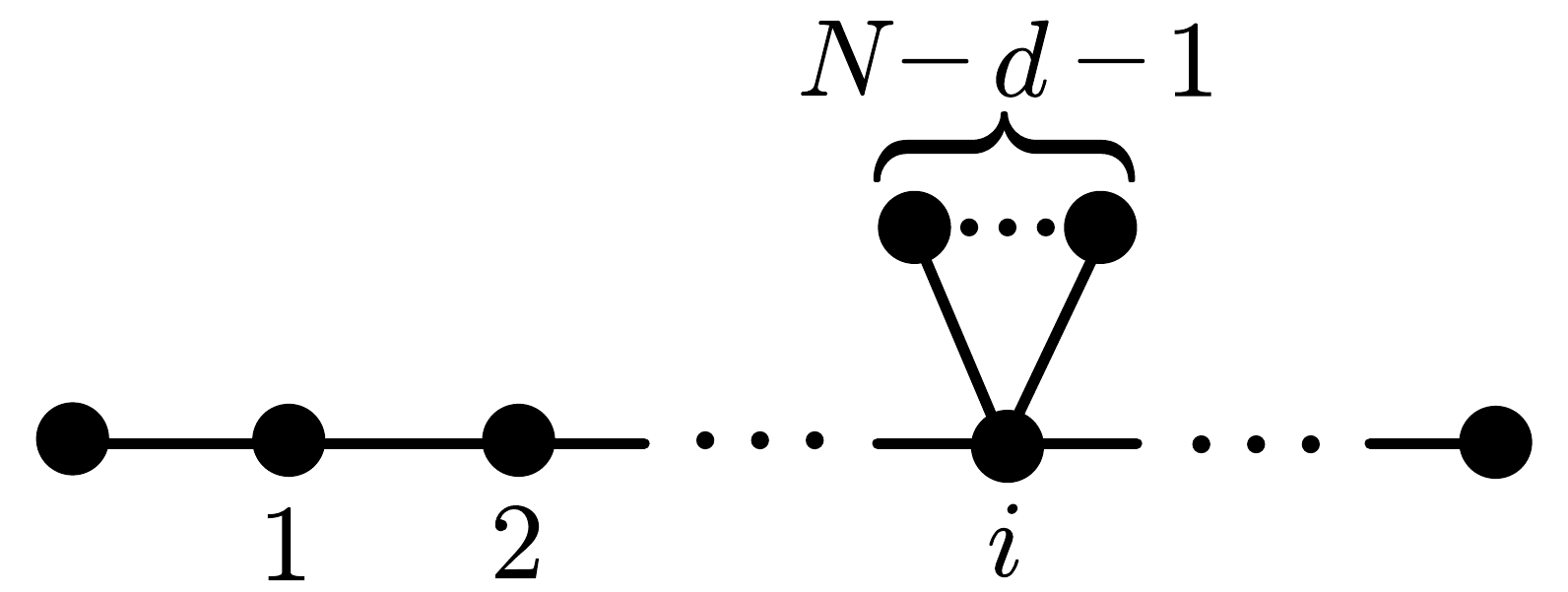}
\caption{The caterpillar $P_{N,d,i}$, a path of length $d$ with a bouquet containing $N-d-1$ leaf nodes attached to the $i^\mathrm{th}$ internal node on the path.}
\vspace{-0.1 cm}
\label{fig:Pndi}
\end{figure}

\begin{figure}[thb]
\centering
\includegraphics[width=0.7\maxwidth]{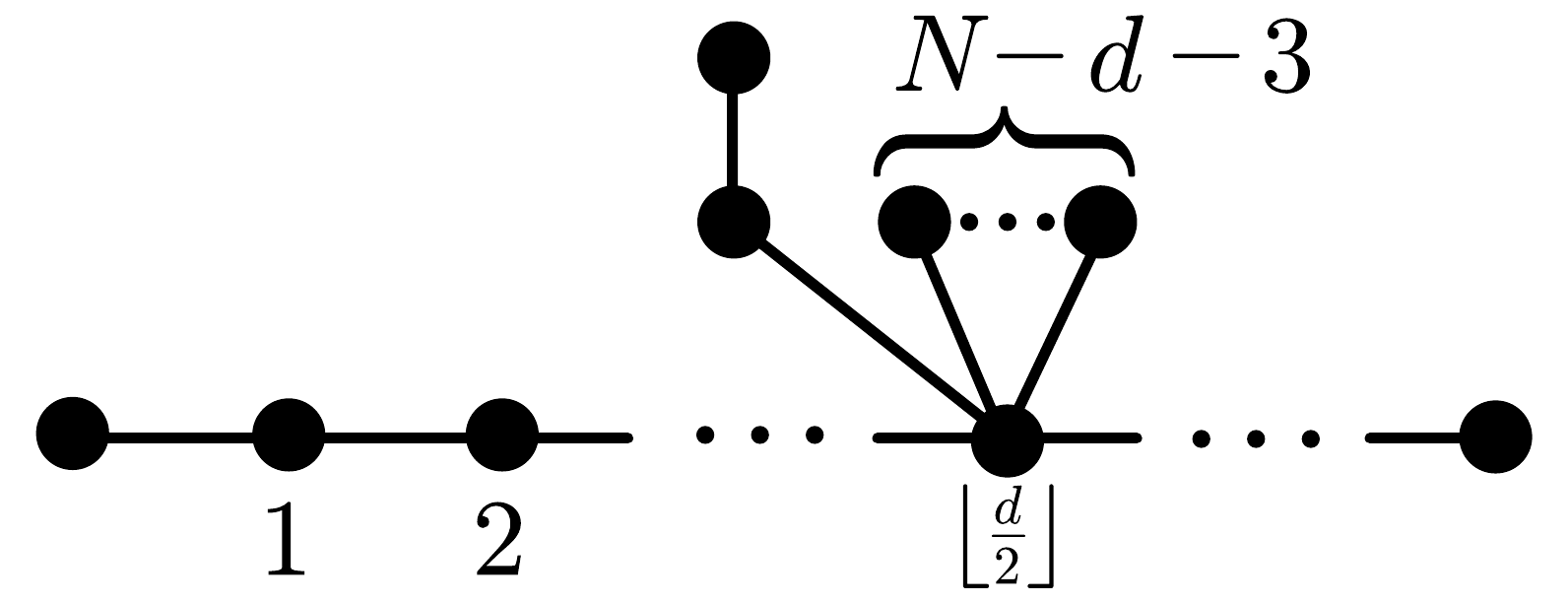}
\caption{The tree $N_{N,d}$, formed from $P_{N-1,d,\lfloor\frac{d}{2}\rfloor}$ by attaching an additional node to one of the leaves in the central bouquet. Note that $N-d-3$ could be $0$.}
\vspace{-0.1 cm}
\label{fig:Nnd}
\end{figure}

The \emph{double palm tree} (also referred to as a dumbbell in \cite{Dobrynin2001}) is a caterpillar with two bouquets, one at each end of the path (see Figure \ref{fig:double}). We use $D_{N,p,q}$ to denote the double palm tree on $N$ nodes, with bouquets of sizes $p$ and $q$. Note that this requires $p + q \leq N-2$ and that $D_{N,p,q} = D_{N,q,p}$. If we take a rooted tree $T$ (with root $r$) and separately attach two paths containing $l$ and $k$ nodes to the root, we call the resulting tree a \emph{vine} and denote it by $T^r_{l,k}$ (see Figure \ref{fig:Trlk}). Note that $T^r_{l,k} = T^r_{k,l}$.

\begin{figure}[thb]
\centering
\includegraphics[width=0.7\maxwidth]{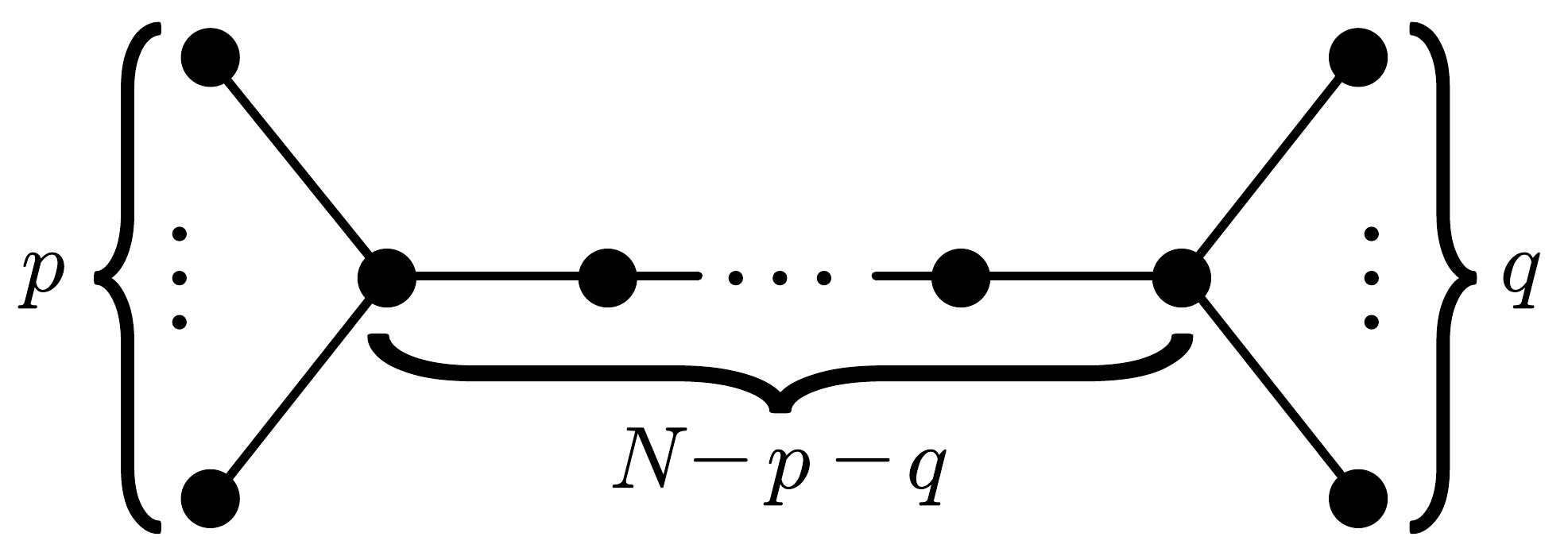}
\caption{Double palm tree $D_{N,p,q}$, with bouquets of sizes $p$ and $q$ at each end of a path.}
\vspace{-0.1 cm}
\label{fig:double}
\end{figure}

\begin{figure}[thb]
\centering
\includegraphics[width=0.7\maxwidth]{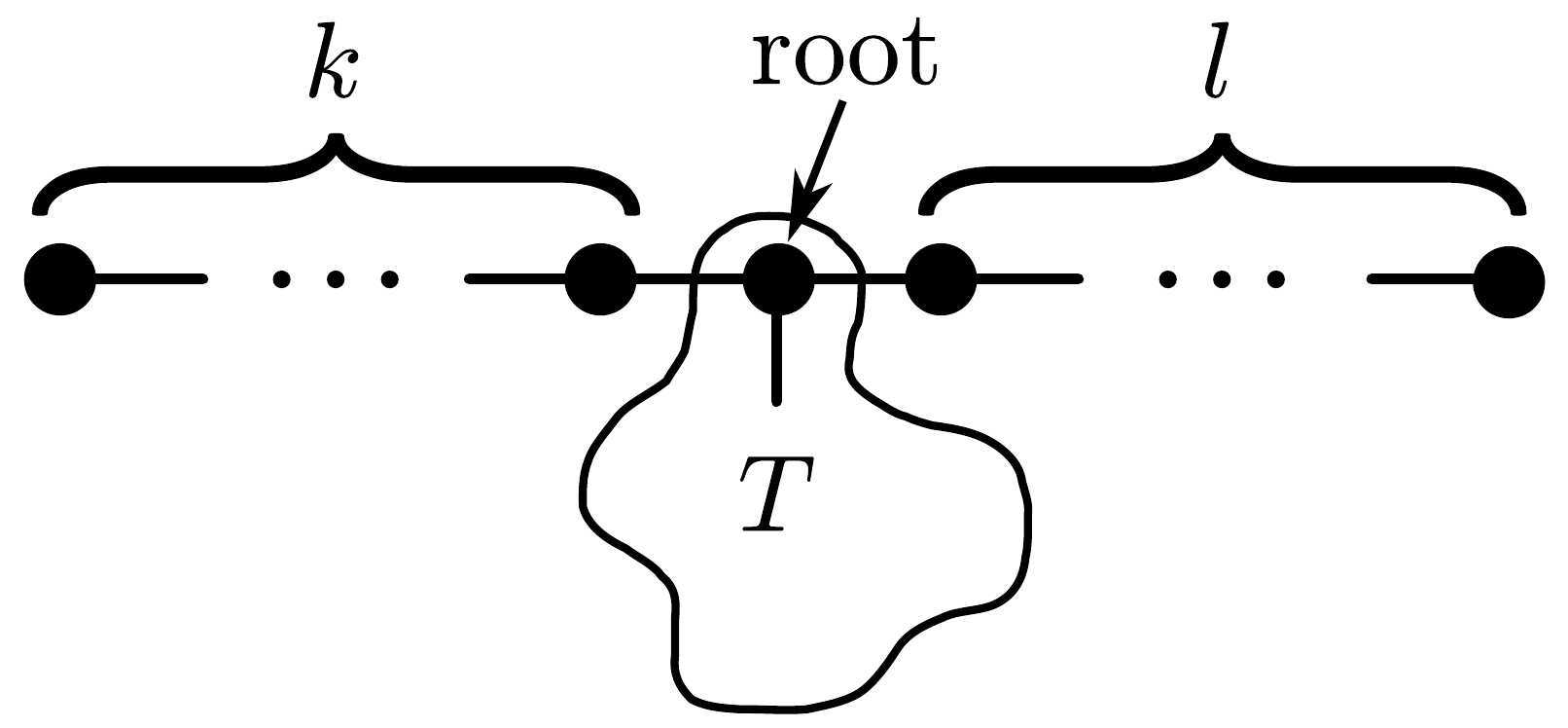}
\caption{The vine $T^r_{l,k}$, formed from a rooted tree $T$ by separately connecting paths containing $l$ and $k$ nodes to the root.}
\vspace{-0.1 cm}
\label{fig:Trlk}
\end{figure}

%%%%%%%%%%%%%%%%%%%%%%%%%%%%%%%%%%%%%%%%%%%%%%%%%%%%%%%%%%%%%%%%%%%%%%%%%%%%
\section{MANIPULATIONS TO REDUCE THE EFFECTIVE RESISTANCE OF TREES}\label{sec:manipulate}

We can now start to describe a partial ordering on trees based on their $\mathcal{H}_2$ norms (i.e. their effective resistances). In this section, every tree is assumed to have a unit weight on every edge. First, we determine the effect of moving a leaf from one end of a double palm tree to the other, and use this to derive a complete ordering of all trees in $\mathcal{T}_{N,3}$ (Theorem \ref{thm:d3}). Second, we consider moving a leaf from one end of a vine to the other, and use this to prove that the path has the largest $\mathcal{H}_2$ norm of any tree with $N$ nodes (Theorem \ref{thm:path}), and to derive a complete ordering of $\mathcal{T}_{N,N-2}$ (Theorem \ref{thm:dN-2}). Finally, by moving all (or almost all) nodes in a bouquet to an adjacent node, we show that $P_{N,d,\lfloor\frac{d}{2}\rfloor}$ has the smallest $\mathcal{H}_2$ norm of any tree with diameter $d$ (Theorem \ref{thm:P}) and that the star has the smallest $\mathcal{H}_2$ norm of any tree with $N$ nodes (Theorem \ref{thm:d-1}). From Theorem \ref{thm:d-1} we also conclude that for any tree that is not a star, we can find a tree of smaller diameter with a smaller $\mathcal{H}_2$ norm.

\subsection{Double Palm Trees}\label{subsec:DNpq}
We begin our partial ordering by showing that the $\mathcal{H}_2$ norm of a double palm tree is reduced when we move a single node from the smaller bouquet to the larger one.

\begin{lemma}\label{lem:DNpq}
Let $1 < p \leq q$ and $p + q \leq N - 2$. Then $\mathcal{H}_2\left(D_{N,p,q}\right) > \mathcal{H}_2\left(D_{N,p-1,q+1}\right)$.
\end{lemma}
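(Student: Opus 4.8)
The plan is to reduce the claim to a statement about the Kirchhoff index via the equivalence \refe{eqn:H2resist}. Since $D_{N,p,q}$ and $D_{N,p-1,q+1}$ are both trees on the same number $N$ of nodes with unit edge weights, \refe{eqn:H2resist} shows that $H = \left(K_f/(2N)\right)^{1/2}$ with the same $N$ in each case, so comparing the two $\mathcal{H}_2$ norms is equivalent to comparing their Kirchhoff indices, which by \refe{eqn:treeresist} equal the pairwise distance sums $\sum_{i<j} d_{i,j}$. It therefore suffices to prove that moving one leaf from the size-$p$ bouquet (attached at a node I call $a$) to the size-$q$ bouquet (attached at a node $b$) strictly decreases the total distance.

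The observation that makes this tractable is that relocating a single leaf $w$ leaves every other node in place, so the only pairwise distances that change are those between $w$ and the remaining $N-1$ nodes. Since $w$ hangs off $a$ before the move and off $b$ after it, for every other node $u$ we have $d_{w,u} = 1 + d_{a,u}$ in $D_{N,p,q}$ and $d_{w,u} = 1 + d_{b,u}$ in $D_{N,p-1,q+1}$, while all the distances $d_{a,u}$ and $d_{b,u}$ are themselves unaffected by the move. Hence the net change in the Kirchhoff index is $\sum_{u \ne w}(d_{b,u} - d_{a,u}) = S_b - S_a$, the difference of the distance sums of the two attachment points taken over all nodes except $w$.

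It then remains to evaluate $S_a$ and $S_b$ explicitly. Writing $\ell = N - 1 - p - q$ for the length of the spine joining $a$ to $b$ (so $\ell \ge 1$ by the hypothesis $p+q \le N-2$), each sum splits into contributions from the two bouquets and from the spine, and a short count gives $S_a = (p-1) + \tfrac{\ell(\ell+1)}{2} + q(\ell+1)$ and $S_b = (p-1)(\ell+1) + \tfrac{\ell(\ell+1)}{2} + q$. Subtracting, the spine terms cancel and one is left with $S_b - S_a = \ell\,(p-1-q)$. Because $p \le q$ forces $p-1-q \le -1$ and $\ell \ge 1$, this is strictly negative, so the Kirchhoff index, and hence the $\mathcal{H}_2$ norm, strictly decreases.

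I do not expect a genuine obstacle here, since the content reduces to a single clean distance count; the only points requiring care are bookkeeping ones. Specifically, I would check that the hypotheses $1 < p \le q$ and $p+q \le N-2$ guarantee both that the spine is nonempty ($\ell \ge 1$, so $a \ne b$ and the count above is valid) and that $D_{N,p-1,q+1}$ is still a legitimate double palm tree with a nonempty smaller bouquet ($p-1 \ge 1$), and that the distances to the spine nodes and to the far bouquet are tallied correctly in each of $S_a$ and $S_b$.
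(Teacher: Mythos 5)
Your proposal is correct and follows essentially the same argument as the paper: reduce to the Kirchhoff index via \refe{eqn:H2resist} and \refe{eqn:treeresist}, observe that relocating the single leaf changes only its own distances to the other $N-1$ nodes, and count the net change, obtaining $\ell(p-1-q) = -(N-p-q-1)(q-p+1) < 0$, which matches the paper's computation. The only cosmetic difference is that you route the count through the distance sums $S_a$, $S_b$ of the two attachment points rather than enumerating the leaf's distances directly in each tree.
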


\begin{proof}
In $D_{N,p,q}$, let us label one of the nodes in the bouquet of size $p$ as node $1$. The remaining nodes are labelled $2$ through $N$. To form $D_{N,p-1,q+1}$, we take node $1$ and move it to the other bouquet. Since all other nodes remain unchanged, we can use equation \refe{eqn:treeresist} to write
\begin{align}\label{eqn:doublepalm}
K_f\left(D_{N,p,q}\right)& - K_f\left(D_{N,p-1,q+1}\right) = \nonumber \\
&\,\left(\sum_{j=2}^N{d_{1,j}}\right)_{D_{N,p,q}} - \left(\sum_{j=2}^N{d_{1,j}}\right)_{D_{N,p-1,q+1}}
\end{align}

Now, in $D_{N,p,q}$, the path length between node $1$ and any of the remaining $p-1$ nodes in the bouquet of size $p$ is $2$. Similarly, the path length between node $1$ and any node in the bouquet of size $q$ is $N - p - q + 1$. Finally, the path lengths between node $1$ and the internal nodes take on each integer value from $1$ to $N - p - q$.

Conversely, in $D_{N,p-1,q+1}$, the path length between node $1$ and any of the nodes in the bouquet of size $p-1$ is $N - p - q + 1$. The path length between node $1$ and any of the remaining $q$ nodes in the bouquet of size $q+1$ is $2$. Again, the path lengths between node $1$ and the internal nodes take on all integer values from $1$ to $N - p - q$.

Thus,
\begin{align}
\left(\sum_{j=2}^N{d_{1,j}}\right)_{D_{N,p,q}} &- \left(\sum_{j=2}^N{d_{1,j}}\right)_{D_{N,p-1,q+1}} =\nonumber \\
&\, (N-p-q-1)(q-p+1).
\end{align}
And this is positive since by our assumptions, $N - p - q - 1 \geq 1$ and $q - p + 1 \geq 1$. Therefore, by equation \refe{eqn:doublepalm}, $K_f\left(D_{N,p,q}\right) - K_f\left(D_{N,p-1,q+1}\right) > 0$, and so $K_f\left(D_{N,p,q}\right) > K_f\left(D_{N,p-1,q+1}\right)$. Hence, by equation \refe{eqn:H2resist},
\begin{equation}
\mathcal{H}_2\left(D_{N,p,q}\right) > \mathcal{H}_2\left(D_{N,p-1,q+1}\right)
\end{equation}
\end{proof}

Although Lemma \ref{lem:DNpq} applies to double palm trees with any diameter, we can apply it to trees with $d = 3$ in order to prove our first main result.

\begin{theorem}\label{thm:d3}
For $N \geq 4$, we have a complete ordering of $\mathcal{T}_{N,3}$, namely $\mathcal{H}_2\left(D_{N,1,N-3}\right) < \mathcal{H}_2\left(D_{N,2,N-4}\right) < \ldots < \mathcal{H}_2\left(D_{N,\lfloor\frac{N-2}{2}\rfloor,\lceil\frac{N-2}{2}\rceil}\right)$.
\end{theorem}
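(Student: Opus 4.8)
The plan is to reduce the theorem to a repeated application of Lemma \ref{lem:DNpq}, after first establishing that the trees $D_{N,p,N-2-p}$ listed in the statement are in fact \emph{all} of $\mathcal{T}_{N,3}$.

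First I would classify the diameter-$3$ trees. Take any $T \in \mathcal{T}_{N,3}$ and fix a path $v_0 v_1 v_2 v_3$ realizing the diameter. Any further node $w$ cannot be adjacent to $v_0$ or $v_3$, since this would produce a path of length $4$; hence $w$ attaches only at the internal nodes $v_1$ or $v_2$. Moreover $w$ must itself be a leaf: if $w$ were adjacent to $v_1$ and to some other node $u$, then $u\,w\,v_1\,v_2\,v_3$ would be a path of length $4$, again contradicting $d=3$. Thus every node other than $v_1,v_2$ is a leaf adjacent to $v_1$ or to $v_2$, so $T$ is a double palm tree with two bouquets of sizes $p$ and $q$ hanging from the adjacent internal nodes $v_1,v_2$. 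Since $v_0$ and $v_3$ are among these leaves we have $p,q \geq 1$, and counting nodes gives $p+q = N-2$. Using $D_{N,p,q} = D_{N,q,p}$ and taking $p \leq q$, the elements of $\mathcal{T}_{N,3}$ are exactly $D_{N,p,N-2-p}$ for $1 \leq p \leq \lfloor\frac{N-2}{2}\rfloor$.

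Next I would run the chain of inequalities. For each $p$ with $2 \leq p \leq \lfloor\frac{N-2}{2}\rfloor$, set $q = N-2-p$. The hypotheses of Lemma \ref{lem:DNpq} hold: $1 < p$ by assumption, $p \leq q$ because $p \leq \frac{N-2}{2}$, and $p+q = N-2 \leq N-2$. The lemma therefore gives
\begin{equation*}
\mathcal{H}_2\left(D_{N,p,N-2-p}\right) > \mathcal{H}_2\left(D_{N,p-1,N-1-p}\right),
\end{equation*}
and since $D_{N,p-1,N-1-p}$ is precisely the term indexed by $p-1$ in the statement, stringing these together for $p = 2,3,\ldots,\lfloor\frac{N-2}{2}\rfloor$ yields the claimed strictly increasing chain from $D_{N,1,N-3}$ up to $D_{N,\lfloor\frac{N-2}{2}\rfloor,\lceil\frac{N-2}{2}\rceil}$.

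The chaining is purely formal once the lemma is in hand, so the only real content beyond Lemma \ref{lem:DNpq} is the classification step. I expect the main obstacle to be arguing carefully that the list $D_{N,p,N-2-p}$ exhausts $\mathcal{T}_{N,3}$ — that is, that diameter \emph{exactly} $3$ forces the double palm structure — together with checking that the lemma's hypotheses remain valid at the balanced extreme $p = \lfloor\frac{N-2}{2}\rfloor$ (where $p=q$ is permitted, consistent with the lemma's $p \leq q$). The degenerate small cases $N=4,5$ (where $\mathcal{T}_{N,3}$ is a single tree and the ``ordering'' is vacuous) should also be noted explicitly.
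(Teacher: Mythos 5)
Your proposal is correct and follows essentially the same route as the paper: classify every diameter-$3$ tree as a double palm tree $D_{N,p,N-2-p}$ by showing that extra nodes must be leaves attached to the two internal nodes of a longest path, then chain Lemma~\ref{lem:DNpq} over $p$. Your version just spells out the hypothesis checks and the degenerate cases $N=4,5$ slightly more explicitly than the paper does.
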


\begin{proof}
Any tree with $d = 3$ must have a longest path of length $3$. Any additional nodes in the tree must be connected through some path to one of the two internal nodes on this longest path (since connecting them to either end of the path would create a longer path). In addition, any node adjacent to one of the internal nodes of the longest path forms a path of length $3$ with the node at the far end of the path. Hence all such nodes must be leaves and so every tree with $d = 3$ is a double palm tree. The ordering follows from Lemma \ref{lem:DNpq}.
\end{proof}

\subsection{Vines}\label{subsec:vines}

We now switch our attention to finding an ordering of trees with the largest possible values of $d$. Our next lemma applies to trees of any diameter, but again we can specialize it to give the results we need.

\begin{lemma}\label{lem:Trlk}
Let $T$ be a tree containing more than one node and with a root $r$, and let $l,k$ be any positive integers such that $1 \leq l \leq k$. Then $\mathcal{H}_2\left(T^r_{l,k}\right) < \mathcal{H}_2\left(T^r_{l-1,k+1}\right)$.
\end{lemma}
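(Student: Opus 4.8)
The plan is to reduce the comparison of $\mathcal{H}_2$ norms to a comparison of Kirchhoff indices and then isolate the single node whose position differs between the two trees, exactly as in the proof of Lemma \ref{lem:DNpq}. Since $T^r_{l,k}$ and $T^r_{l-1,k+1}$ both have $N$ nodes, equation \refe{eqn:H2resist} shows that it suffices to prove $K_f(T^r_{l,k}) < K_f(T^r_{l-1,k+1})$, and since both are trees I can use the distance formula \refe{eqn:treeresist}, $K_f = \sum_{i<j} d_{i,j}$. I would obtain $T^r_{l-1,k+1}$ from $T^r_{l,k}$ by detaching the leaf at the far end of the shorter ($l$-node) path and re-attaching it at the far end of the longer path, so that the short path loses a node and the long path gains one. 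Label this moving node as node $1$. Every other pair of nodes retains the same connecting path and hence the same distance, so
\begin{equation*}
K_f(T^r_{l,k}) - K_f(T^r_{l-1,k+1}) = \left(\sum_{j=2}^N d_{1,j}\right)_{T^r_{l,k}} - \left(\sum_{j=2}^N d_{1,j}\right)_{T^r_{l-1,k+1}}.
\end{equation*}

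Next I would evaluate these two sums of distances from node $1$. The remaining $N-1$ nodes split into three groups: the nodes of the rooted tree $T$, the other $l-1$ nodes of the short path, and the $k$ nodes of the long path. The key geometric observation is that any path from node $1$ to a node of $T$, or to a node on the opposite path, must pass through the root $r$. Writing $\Sigma_T := \sum_{w\in T} d_{r,w}$ for the sum of distances from $r$ to the nodes of $T$, node $1$ sits at distance $l$ from $r$ in $T^r_{l,k}$ and at distance $k+1$ from $r$ in $T^r_{l-1,k+1}$. Summing within each group (using $\sum_{m=1}^{n} m = \tfrac{n(n+1)}{2}$ for the within-path terms) and subtracting, the $\Sigma_T$ contributions and the within-path contributions cancel, and I expect the difference to collapse to
\begin{equation*}
K_f(T^r_{l,k}) - K_f(T^r_{l-1,k+1}) = (l-k-1)(|T|-1),
\end{equation*}
where $|T| \geq 2$ is the number of nodes of $T$.

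The conclusion is then immediate: $T$ has more than one node, so $|T| - 1 \geq 1 > 0$, while $l \leq k$ forces $l - k - 1 \leq -1 < 0$; hence the difference is strictly negative, giving $K_f(T^r_{l,k}) < K_f(T^r_{l-1,k+1})$ and, by \refe{eqn:H2resist}, the claimed inequality. I expect the main obstacle to be purely in the bookkeeping: correctly accounting for the fact that distances from node $1$ to everything ``on the other side'' of the root pick up the offset $l$ (respectively $k+1$), and verifying the precise cancellation that eliminates $\Sigma_T$. It is also worth noting where the hypothesis is used: if $T$ were a single node, then $|T|-1 = 0$ and both vines would in fact be the same path on $l+k+1$ nodes, so strict inequality genuinely requires $T$ to contain more than one node.
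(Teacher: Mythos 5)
Your proposal is correct and follows essentially the same route as the paper's proof: move the single leaf from the end of the shorter path to the end of the longer one, reduce to comparing sums of distances from that node via equations \refe{eqn:H2resist} and \refe{eqn:treeresist}, and observe that only the distances to the $|T|-1$ non-root nodes of $T$ fail to cancel, yielding a difference of $(l-k-1)(|T|-1)$, which agrees with the paper's $(N-l-k-1)(k-l+1)$. The bookkeeping you flag as the main obstacle works out exactly as you expect, so nothing further is needed.
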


\begin{proof}
Let the total number of nodes in $T^r_{l,k}$ be $N$ (so $N$ equals $k + l$ plus the number of nodes in $T$), and label the leaf at the end of the path containing $l$ nodes as $1$. Label the remaining nodes in the two paths as $2$ through $l+k$, and label the root of $T$ as $l+k+1$. The remaining nodes in $T$ are labelled as $l + k + 2$ through $N$. To form $T^r_{l-1,k+1}$, we take node $1$ and move it to the end of the other path. Since all other nodes remain unchanged, we can use equation \refe{eqn:treeresist} to write
\begin{align}\label{eqn:doublepath}
K_f&\left(T^r_{l-1,k+1}\right) - K_f\left(T^r_{l,k}\right) = \nonumber \\
&\,\left(\sum_{j=2}^N{d_{1,j}}\right)_{T^r_{l-1,k+1}} - \left(\sum_{j=2}^N{d_{1,j}}\right)_{T^r_{l,k}}.
\end{align}

Now, in both $T^r_{l,k}$ and $T^r_{l-1,k+1}$, the path lengths between node $1$ and all nodes along the paths (including the root of $T$) take on each integer value between $1$ and $l + k$. Hence the sum of these path lengths does not change between the two trees. Furthermore, since the root of $T$ lies on every path between node $1$ and any other node in $T$, we can write
\begin{equation}
d_{1,j} = d_{1,l+k+1} + d_{l+k+1,j}, \; j \geq l+k+2.
\end{equation}
Therefore, for $T^r_{l,k}$,
\[
\sum_{j=l+k+2}^N{d_{1,j}} = (N-l-k-1)l + \sum_{j=l+k+2}^N{d_{l+k+1,j}}
\]
and for $T^r_{l-1,k+1}$,
\[
\sum_{j=l+k+2}^N{d_{1,j}} = (N-l-k-1)(k+1) + \sum_{j=l+k+2}^N{d_{l+k+1,j}},
\]
where every term in the second summation is the same for both trees. Thus
\begin{align}
\left(\sum_{j=2}^N{d_{1,j}}\right)_{T^r_{l-1,k+1}} - &\left(\sum_{j=2}^N{d_{1,j}}\right)_{T^r_{l,k}} = \nonumber \\
&(N-l-k-1)(k-l+1),
\end{align}
which is positive by our assumptions. Therefore, by equations \refe{eqn:doublepath} and \refe{eqn:H2resist}, we have
\begin{equation}
\mathcal{H}_2\left(T^r_{l,k}\right) < \mathcal{H}_2\left(T^r_{l-1,k+1}\right).
\end{equation}
\end{proof}

The first consequence of Lemma \ref{lem:Trlk} is that the tree with largest $d$ (i.e. $d=N-1$) also has the largest $\mathcal{H}_2$ norm.

\begin{theorem}\label{thm:path}
The path $P_N$ has the largest $\mathcal{H}_2$ norm of any tree with $N$ nodes.
\end{theorem}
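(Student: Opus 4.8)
The plan is to prove the statement by a straightening procedure built on Lemma~\ref{lem:Trlk}: starting from an arbitrary tree $T'$ on $N$ nodes that is not a path, I will repeatedly apply the lemma to strictly increase the $\mathcal{H}_2$ norm while reducing the number of leaves, until the path $P_N$ is reached. Since Lemma~\ref{lem:Trlk} gives $\mathcal{H}_2(T^r_{l,k}) < \mathcal{H}_2(T^r_{l-1,k+1})$ for $1 \le l \le k$, iterating it from $l$ down to $0$ yields $\mathcal{H}_2(T^r_{l,k}) < \mathcal{H}_2(T^r_{0,l+k})$. In words, collapsing two pendant paths that share a common endpoint $r$ into a single pendant path attached to $r$ strictly raises the norm. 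Each such collapse keeps $N$ fixed and removes exactly one leaf, so finitely many collapses will suffice, and the chain of strict inequalities will give $\mathcal{H}_2(T') < \mathcal{H}_2(P_N)$.

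The key structural step is to exhibit, in any non-path tree, a vertex $r$ to which Lemma~\ref{lem:Trlk} applies, i.e. a branch vertex (degree $\ge 3$) at least two of whose incident branches are bare paths ending in leaves. To locate one, I would pass to the tree $\tilde{T}$ obtained from $T'$ by suppressing all degree-$2$ vertices; its leaves are exactly the leaves of $T'$ and its remaining vertices are exactly the branch vertices of $T'$, with each edge of $\tilde{T}$ corresponding to a pendant-path segment of $T'$. Taking a longest leaf-to-leaf path in $\tilde{T}$ and letting $r$ be the vertex adjacent to one endpoint $u$ of this path, maximality forces every $\tilde{T}$-neighbor of $r$ other than the one continuing the path to be a leaf; since $r$ has degree $\ge 3$, at least two such leaf-neighbors (including $u$) occur, and these correspond to two pendant paths of $T'$ meeting at $r$, say with $l \le k$ nodes.

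With such an $r$ fixed, I would write $T'$ as the vine $T^r_{l,k}$, where $T$ consists of $r$ together with the remaining branches at $r$. Because $r$ is a genuine branch vertex, at least one further branch survives, so $T$ has more than one node and Lemma~\ref{lem:Trlk} genuinely applies; collapsing the two pendant paths produces $T^r_{0,l+k}$, a tree on $N$ nodes with one fewer leaf and strictly larger $\mathcal{H}_2$ norm. Formally this is an induction on the number of leaves, whose base case is a tree with two leaves (necessarily $P_N$) and whose inductive step is exactly the collapse just described. I expect the only real obstacle to be the bookkeeping in the existence step rather than any analytic difficulty: one must be certain that a non-path tree always presents two pendant paths at a common branch vertex, so that $T$ is nontrivial and the lemma is applicable, and that each collapse strictly decreases the leaf count so that the induction terminates precisely at $P_N$. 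The inequality itself is entirely supplied by Lemma~\ref{lem:Trlk}, so no new estimate is needed and the content is purely combinatorial, lying in organizing the repeated application so that every intermediate object is again a legitimate vine.
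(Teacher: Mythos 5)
Your proposal is correct and follows essentially the same route as the paper: locate a vertex of degree at least $3$ with two pendant paths attached, write the tree as a vine $T^r_{l,k}$ with $T$ nontrivial, and apply Lemma~\ref{lem:Trlk}. The paper states this more tersely (leaving the existence of such a vertex and the termination of the iteration implicit, via finiteness of the set of trees on $N$ nodes), whereas you spell out the existence argument and organize the iteration as an induction on the number of leaves; these are elaborations, not a different method.
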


\begin{proof}
Any tree $T_1$ which is not a path will contain a node with degree greater than $2$. We can locate one such node that has two paths (each with fewer than $N$ nodes) attached. Let $T$ be the tree formed by removing these two paths from $T_1$, and let this node be the root of $T$. Then $T_1 = T^r_{l,k}$, and by Lemma \ref{lem:Trlk} we can find a tree with larger $\mathcal{H}_2$ norm.
\end{proof}

We can also use Lemma \ref{lem:Trlk} to derive an ordering of those trees with $d$ one less than its maximum value (i.e. $d = N-2$).

\begin{theorem}\label{thm:dN-2}
For $N \geq 4$, we have a complete ordering of $\mathcal{T}_{N,N-2}$, namely $\mathcal{H}_2\left(P_{N,N-2,\lfloor\frac{N-2}{2}\rfloor}\right) < \mathcal{H}_2\left(P_{N,N-2,\lfloor\frac{N-2}{2}\rfloor-1}\right) < \ldots < \mathcal{H}_2\left(P_{N,N-2,1}\right)$.
\end{theorem}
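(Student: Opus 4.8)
The plan is to first characterize $\mathcal{T}_{N,N-2}$ explicitly, then realize each of its members as a vine so that Lemma \ref{lem:Trlk} applies directly. First I would argue that every tree with $N$ nodes and diameter $N-2$ is some $P_{N,N-2,i}$. A longest path of length $N-2$ contains exactly $N-1$ nodes, leaving a single remaining node. Since the tree is acyclic, this node has exactly one incident edge and is therefore a leaf attached to some node of the path; attaching it to either endpoint would create a path of length $N-1$, so it must attach to an internal node. Up to the reflection symmetry of the path, the attachment point can be taken to be the $i$-th internal node with $1 \le i \le \lfloor\frac{N-2}{2}\rfloor$, giving exactly the trees $P_{N,N-2,i}$.

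Next I would write $P_{N,N-2,i}$ as a vine. Take $T$ to be the two-node rooted tree consisting of the attachment node $r$ together with its single pendant leaf. Cutting the longest path at $r$ leaves two paths containing $i$ and $N-2-i$ nodes respectively, so $P_{N,N-2,i} = T^r_{i,\,N-2-i}$. The hypothesis $i \le \lfloor\frac{N-2}{2}\rfloor$ is exactly the condition $i \le N-2-i$ needed to place the shorter path first, i.e. $l = i \le k = N-2-i$, as required by the lemma.

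Then I would apply Lemma \ref{lem:Trlk} with $l = i$ and $k = N-2-i$ for each $i$ in the range $2 \le i \le \lfloor\frac{N-2}{2}\rfloor$. The lemma gives $\mathcal{H}_2(T^r_{i,N-2-i}) < \mathcal{H}_2(T^r_{i-1,N-1-i})$; since $i - 1 \ge 1$, the tree $T^r_{i-1,N-1-i}$ still carries its leaf on an internal node and so has diameter $N-2$, meaning it equals $P_{N,N-2,i-1}$. This yields $\mathcal{H}_2(P_{N,N-2,i}) < \mathcal{H}_2(P_{N,N-2,i-1})$ for each such $i$, and chaining these inequalities from $i = \lfloor\frac{N-2}{2}\rfloor$ down to $i = 2$ produces the claimed complete ordering. (Pushing one step further to $i = 1$ would move the leaf onto an endpoint and produce the path $P_N$, consistent with Theorem \ref{thm:path}, but that step leaves $\mathcal{T}_{N,N-2}$ and is not needed here.)

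I expect the main obstacle to be bookkeeping rather than conceptual: correctly identifying the vine decomposition and, crucially, verifying that each application of the lemma keeps the tree inside $\mathcal{T}_{N,N-2}$, i.e. that the diameter stays $N-2$ and does not jump to $N-1$. This reduces to a careful check of node counts together with the inequality $i \le N-2-i$ at the extreme value $i = \lfloor\frac{N-2}{2}\rfloor$ (where the two paths are as balanced as possible and the bound holds with equality when $N$ is even).
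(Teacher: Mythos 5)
Your proposal is correct and follows essentially the same route as the paper: characterize every tree in $\mathcal{T}_{N,N-2}$ as some $P_{N,N-2,i}$, write it as the vine $T^r_{i,N-2-i}$ with $T$ a rooted two-node path, and chain applications of Lemma \ref{lem:Trlk}. The only cosmetic difference is that you descend from $i$ to $i-1$ where the paper ascends from $i$ to $i+1$; the verification that $i\le\lfloor\frac{N-2}{2}\rfloor$ matches the lemma's hypothesis $l\le k$ is the same check.
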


\begin{proof}
Every tree in $\mathcal{T}_{N,N-2}$ must contain a path of length $N-2$ (which contains $N-1$ nodes), and one additional node. This node must be adjacent to an internal node of the path, since otherwise we would have a path of length $N-1$. Thus every tree in $\mathcal{T}_{N,N-2}$ is of the form $P_{N,N-2,i}$, for some $1 \leq i \leq \lfloor\frac{N-2}{2}\rfloor$. Now, $P_{N,N-2,i} = T^r_{i,N-i-2}$, with $T$ a path containing $2$ nodes (and one identified as the root). Suppose that $i < \lfloor\frac{N-2}{2}\rfloor$. Then $i < N-i-2$, and so by Lemma \ref{lem:Trlk}, $\mathcal{H}_2\left(P_{N,N-2,i}\right)  = \mathcal{H}_2\left(T^r_{i,N-2-i}\right) > \mathcal{H}_2\left(T^r_{i+1,N-3-i}\right) = \mathcal{H}_2\left(P_{N,N-2,i+1}\right)$.
\end{proof}

Each tree in $\mathcal{T}_{N,N-2}$ consists of a path of length $N-2$ with one leaf attached to an internal node.  Theorem~\ref{thm:dN-2} ensures that the $\mathcal{H}_2$ norm is smallest when this internal node is at the center of the path.

\subsection{Caterpillars}\label{subsec:caterpillars}

We now have complete orderings for $\mathcal{T}_{N,2}$ (trivial, since $\mathcal{T}_{N,2}$ contains only the star), $\mathcal{T}_{N,3}$ (by Theorem \ref{thm:d3}), $\mathcal{T}_{N,N-2}$ (by Theorem \ref{thm:dN-2}) and $\mathcal{T}_{N,N-1}$ (trivial, since $\mathcal{T}_{N,N-1}$ contains only the path). We next consider the remaining families of trees with $4 \leq d \leq N-3$ (and hence, $N \geq 7$).

Rather than deriving complete orderings, the main goal of the next two lemmas is to find the tree in $\mathcal{T}_{N,d}$ with lowest $\mathcal{H}_2$ norm. However, we use two separate steps to attain our result as this provides greater insight into the ordering amongst the remaining trees in $\mathcal{T}_{N,d}$. Lemma \ref{lem:PvN} then allows us to combine the results to prove (Theorem \ref{thm:P}) that among trees of diameter $d$, the one with lowest $\mathcal{H}_2$ norm is the caterpillar with one bouquet at the central node $P_{N,d,\lfloor\frac{d}{2}\rfloor}$.  Theorem~\ref{thm:d-1} provides a comparison of trees with different diameter.

\begin{lemma}\label{lem:cat}
Suppose $N \geq 7$ and $4 \leq d \leq N-3$. If $T \in \mathcal{C}_{N,d}$, then $\mathcal{H}_2\left(T\right) \geq \mathcal{H}_2\left(P_{N,d,\lfloor\frac{d}{2}\rfloor}\right)$, with equality if and only if $T = P_{N,d,\lfloor\frac{d}{2}\rfloor}$. 
\end{lemma}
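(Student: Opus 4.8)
The plan is to work with the Kirchhoff index $K_f$ rather than $\mathcal{H}_2$ directly, since by \refe{eqn:H2resist} the two induce the same ordering on trees with $N$ nodes, and by \refe{eqn:treeresist} we have $K_f = \sum_{i<j} d_{i,j}$ for a unit-weight tree. First I would fix a parametrization of $\mathcal{C}_{N,d}$: write the diameter path as $p_0, p_1, \ldots, p_d$ (so $p_0, p_d$ are its endpoints and $p_1, \ldots, p_{d-1}$ its internal nodes), and let $n_j \ge 0$ be the number of leaves attached to $p_j$, with $\sum_{j=1}^{d-1} n_j = M := N-d-1$. The hypotheses $N \ge 7$ and $4 \le d \le N-3$ guarantee $M \ge 2$ and that there are at least three internal nodes, so that the central node $p_{\lfloor d/2\rfloor}$ is a genuine interior node and distributing the leaves is a real choice.

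Next I would compute $K_f$ explicitly as a function of $(n_1, \ldots, n_{d-1})$ by splitting the pairwise distances into path--path, leaf--path and leaf--leaf contributions. The path--path part is constant. Writing $f(j) = \sum_{a=0}^{d} |j-a| = j^2 - dj + \tfrac{d^2+d}{2}$ for the total distance from $p_j$ to the path nodes, the leaf--path part contributes $\sum_j n_j\big(f(j) + (d+1)\big)$, and a short calculation collapses the leaf--leaf part into $M(M-1) + \sum_{j<j'} n_j n_{j'}(j'-j)$. Dropping all terms that do not depend on the distribution, I am left with minimizing
\begin{equation}
g(n_1,\ldots,n_{d-1}) = \sum_{j=1}^{d-1} n_j f(j) + \sum_{j<j'} n_j n_{j'}(j'-j)
\end{equation}
subject to $n_j \ge 0$ and $\sum_j n_j = M$. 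The key structural observation is that both terms favor concentrating all mass at the center: $f$ is strictly convex in $j$ with minimum at $j = \lfloor d/2\rfloor$ (and, for odd $d$, equally at $\lceil d/2\rceil$), so $\sum_j n_j f(j) \ge M\min_j f(j)$ with equality only when all leaves sit on a minimizer of $f$; and the cross term $\sum_{j<j'} n_j n_{j'}(j'-j)$ is nonnegative and vanishes exactly when the leaves form a single bouquet. Hence $g$ is minimized precisely when all $M$ leaves are placed in one bouquet at a central node, which is the tree $P_{N,d,\lfloor d/2\rfloor}$, giving both the inequality and its equality case.

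I would also record the local-move reading of this computation, since it is what connects the result to the decentralized theme and to Lemmas \ref{lem:DNpq} and \ref{lem:Trlk}: moving an entire bouquet of $m$ leaves from $p_j$ to an adjacent internal node $p_{j'}$ changes $K_f$ by exactly $m(a-b)$, where deleting the edge $p_jp_{j'}$ from the tree with that bouquet removed splits it into a part of size $a$ containing $p_j$ and a part of size $b$ containing $p_{j'}$ (each affected distance changing by $\pm 1$). This makes explicit that every off-center bouquet can be driven toward the center with a monotone decrease in $K_f$, and it is the natural building block for the decentralized algorithm discussed later.

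The main obstacle I anticipate is the uniqueness bookkeeping rather than the inequality itself. For odd $d$, $f$ has two minimizers $\lfloor d/2\rfloor$ and $\lceil d/2\rceil$, so I must argue that splitting the bouquet between these two adjacent central nodes is strictly worse (the cross term then contributes $n_{\lfloor d/2\rfloor} n_{\lceil d/2\rceil} > 0$), and that a single bouquet at either one is the same unlabeled tree $P_{N,d,\lfloor d/2\rfloor}$ under the left--right reflection; the convention $1 \le i \le \lfloor d/2\rfloor$ from Section \ref{sec:treeterm} is what lets me name it unambiguously. I would also need the routine but necessary check that every admissible distribution keeps leaves on the internal nodes $p_1,\ldots,p_{d-1}$ only, so that the diameter stays exactly $d$ and we never leave $\mathcal{C}_{N,d}$.
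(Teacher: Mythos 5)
Your proposal is correct, and it takes a genuinely different route from the paper. The paper argues by iterated local moves: if $T$ has a single bouquet it is some $P_{N,d,i}$ and Lemma \ref{lem:Trlk} pushes the bouquet to the center one step at a time; if $T$ has several bouquets, the paper repeatedly relocates an outermost bouquet one node toward the center of the longest path, checks by a sign count that each relocation strictly decreases the sum of distances, and iterates until a single bouquet remains. You instead parametrize $\mathcal{C}_{N,d}$ by the leaf distribution $(n_1,\dots,n_{d-1})$, write $K_f$ in closed form, and reduce the claim to minimizing $g(n)=\sum_j n_j f(j)+\sum_{j<j'}n_jn_{j'}(j'-j)$ over the simplex, which follows from strict convexity of $f$ and nonnegativity of the cross term; I checked the three contributions (path--path constant, leaf--path $\sum_j n_j(f(j)+d+1)$, leaf--leaf $M(M-1)+\sum_{j<j'}n_jn_{j'}(j'-j)$) and they are right. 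Your global computation buys a one-shot proof of both the inequality and the equality case (including the odd-$d$ tie between $\lfloor d/2\rfloor$ and $\lceil d/2\rceil$, which you correctly resolve via the cross term and reflection), plus an explicit formula that would yield the finer ordering within $\mathcal{C}_{N,d}$ alluded to in Section \ref{sec:disc}. What the paper's version buys is that its argument consists only of the local, one-step-at-a-time manipulations that drive the decentralized-rearrangement discussion and the extension to directed resistances; you recover that reading anyway with your $\Delta K_f=m(a-b)$ cut formula, so nothing essential is lost. The only points to make sure you write out in full are the ones you already flagged as routine: that every distribution with leaves on internal nodes only stays in $\mathcal{C}_{N,d}$ (diameter exactly $d$), and that the two central single-bouquet configurations for odd $d$ are the same unlabeled tree.
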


\begin{proof}
Since $d \leq N-3$ and $T \in \mathcal{C}_{N,d}$, a longest path in $T$ contains $N - d - 1 \geq 2$ leaves attached to internal nodes (other than the two leaves in the longest path). Suppose that $P_T$ is a longest path in $T$. For the rest of this proof, when we refer to leaf nodes and bouquets, we mean those leaves not on $P_T$, and bouquets made up of these leaves.

Suppose $T$ contains a single bouquet. Thus $T = P_{N,d,i}$ for some $1 \leq i \leq \lfloor\frac{d}{2}\rfloor$. If $i \neq \lfloor\frac{d}{2}\rfloor$, then by Lemma \ref{lem:Trlk}, $\mathcal{H}_2\left(P_{N,d,i}\right) > \mathcal{H}_2\left(P_{N,d,i+1}\right)$. Therefore, the result holds.

Suppose $T$ contains multiple bouquets. Locate a bouquet furthest from the center of $P_T$, and move every leaf in this bouquet one node further from the closest end of $P_T$. Call this new tree $T'$, and label the nodes that were moved $1$ through $n$. Then between $T$ and $T'$, the path lengths between each of these leaves and any other leaf all decrease by $1$. The path lengths between each of these leaves and $\leq \lfloor\frac{d+1}{2}\rfloor$ nodes on $P_T$ all increase by $1$, and the path lengths between each of these leaves and $\geq \lfloor\frac{d+1}{2}\rfloor$ nodes on $P_T$ all decrease by $1$. Thus the sum of the path lengths in $T'$ is less than the sum in $T$, and so by equations \refe{eqn:treeresist} and \refe{eqn:H2resist}, $\mathcal{H}_2\left(T'\right) < \mathcal{H}_2\left(T\right)$.

If $T'$ contains multiple bouquets, we can repeat this procedure until we obtain a caterpillar with a single bouquet. Then by the above argument, the result holds.
\end{proof}

\begin{lemma}\label{lem:noncat}
Suppose that $N \geq 7$ and $4 \leq d \leq N-3$. Let $T$ be a tree in $\mathcal{T}_{N,d}\setminus\mathcal{C}_{N,d}$. Then $\mathcal{H}_2\left(T\right) \geq \mathcal{H}_2\left(N_{N,d}\right)$, with equality if and only if $T = N_{N,d}$.
\end{lemma}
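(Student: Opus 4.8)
The plan is to mimic the reduction argument of Lemma~\ref{lem:cat}: beginning from an arbitrary non-caterpillar $T\in\mathcal{T}_{N,d}\setminus\mathcal{C}_{N,d}$, I will apply a sequence of local moves, each preserving $N$ and the diameter $d$ and each either leaving the tree unchanged or strictly decreasing $\sum_{i<j}d_{i,j}$, and show that the process terminates at $N_{N,d}$. By \refe{eqn:treeresist} and \refe{eqn:H2resist} this yields $\mathcal{H}_2(T)\ge\mathcal{H}_2(N_{N,d})$, and because every nontrivial move is a strict decrease, equality forces $T=N_{N,d}$. The one extra ingredient beyond Lemma~\ref{lem:cat} is that all moves must be constrained to keep the tree a non-caterpillar; otherwise the tree could collapse onto the caterpillar $P_{N,d,\lfloor\frac{d}{2}\rfloor}$, whose norm is strictly smaller by Lemma~\ref{lem:cat}.

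First I fix a longest path $P_T$ (of length $d$) and measure the depth of every node as its distance to $P_T$. Since $T$ is not a caterpillar, at least one node has depth $\ge 2$; I single out one such deep branch and mark it as \emph{protected}, maintaining throughout the invariant that the protected branch always has depth at least $2$, so that the tree remains a non-caterpillar. I then drive down the remaining complexity in two stages, reusing the distance accounting of Lemma~\ref{lem:cat} and the vine move of Lemma~\ref{lem:Trlk}. In the first stage I flatten all unprotected structure: any off-spine node of depth $\ge 2$ that is not on the protected branch, together with any excess width or depth on the protected branch beyond a single pendant path of two nodes, is shifted one step toward the spine, each shift decreasing more pairwise distances than it increases and hence strictly reducing the total. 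This leaves a caterpillar-type tree decorated by bouquets on the spine together with exactly one depth-$2$ pendant path (the shrunken protected branch).

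In the second stage I consolidate: I move each bouquet to the central internal node and relocate the single pendant path so that it, too, hangs from the center, exactly as in the center-of-path computations underlying Lemma~\ref{lem:cat} and Theorem~\ref{thm:dN-2}. Both relocations strictly decrease $\sum_{i<j}d_{i,j}$. The resulting tree is $P_{N,d,\lfloor\frac{d}{2}\rfloor}$ with one central leaf extended by one node --- precisely $N_{N,d}$.

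The main obstacle is the bookkeeping around the protected branch. I must verify that shortening it from depth $\ge 3$ down to depth exactly $2$, and sliding its attachment point toward the center, are each strictly distance-decreasing, while confirming that I cannot and need not reduce it further without violating the non-caterpillar constraint --- that is, that depth $2$ is forced as the stopping point. This is also where the hypotheses $4\le d\le N-3$ enter: a depth-$k$ branch at an internal node splitting the spine into arms of lengths $i$ and $d-i$ needs $k\le\min(i,d-i)$ to avoid creating a path longer than $d$, and at the center $\min(i,d-i)=\lfloor\frac{d}{2}\rfloor\ge 2$ exactly when $d\ge 4$, while $d\le N-3$ guarantees there are at least two off-spine nodes so that such a branch exists. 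The remaining care is to confirm, via the same accounting, that centering every piece is optimal, so that no off-center non-caterpillar can beat $N_{N,d}$, which secures the equality case.
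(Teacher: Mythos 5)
Your proposal follows essentially the same route as the paper's proof: a strict-descent argument on $\sum_{i<j}d_{i,j}$ via local moves that shift the deepest leaves or bouquets toward the spine and then toward the center, with the crucial device of reserving one node at depth $2$ (your ``protected branch''; the paper achieves this by moving only $n-1$ of the $n$ leaves when a lone depth-$2$ bouquet remains) so the tree never collapses into a caterpillar. The paper organizes the argument by the number $m$ of nodes at distance greater than $1$ from the longest path rather than by an explicit protected-branch invariant, but the moves, the pairwise-distance accounting, and the termination and equality reasoning coincide with yours.
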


\begin{proof}
Let $P_T$ be a longest path in $T$ (of length $d$), and let $m$ be the number of nodes with distances to $P_T$ greater than $1$ (the distance between a node and $P_T$ is the shortest distance between that node and any node on the path).

If $m > 1$, locate a bouquet with the greatest distance from $P_T$ and label the leaves in this bouquet $1$ through $n$. Suppose that either the distance between this bouquet and $P_T$ is greater than $2$, or the distance is $2$ and another bouquet exists at a distance $2$ from $P_T$. Let $T'$ be the tree formed by moving all leaves in this bouquet one node closer to $P_T$. By our assumptions, $T' \in \mathcal{T}_{N,d}\setminus\mathcal{C}_{N,d}$. Then the path lengths between each of these nodes and the node they were previously adjacent to increase by $1$. Conversely, the path lengths between each of these nodes and every other node in $T$ decrease by $1$. Since there must be at least $d + 2 \geq 6$ of these other nodes, the sum of all path lengths in $T'$ is smaller than the sum of all path lengths in $T$. Thus, by equations \refe{eqn:treeresist} and \refe{eqn:H2resist}, $\mathcal{H}_2\left(T'\right) < \mathcal{H}_2\left(T\right)$.

If the bouquet we found has a distance of $2$ to $P_T$, and is the only such bouquet, form $T'$ by moving leaves $1$ through $n - 1$ one node closer to $P_T$. Then $T' \in \mathcal{T}_{N,d}\setminus\mathcal{C}_{N,d}$. The path lengths between each node we moved and the node they were previously attached to all increase by $1$. In addition, the path lengths between each node we moved and node $n$ all increase by $1$. However, the path lengths between each node we moved and the remaining $\geq d+1 \geq 5$ nodes in the tree all decrease by $1$. Thus, by equations \refe{eqn:treeresist} and \refe{eqn:H2resist}, $\mathcal{H}_2\left(T'\right) < \mathcal{H}_2\left(T\right)$.

If $m=1$, then $T$ contains a single node at a distance $2$ from $P_T$, and all other nodes in $T$ are either on $P_T$ or adjacent to nodes on $P_T$. Locate a node on $P_T$ with additional nodes attached that is furthest from the center of the path. Label all nodes attached to this node $1$ through $n$ (including the node at distance $2$ from $P_T$ if it is connected to $P_T$ through this node), and label this node $n+1$. If $n+1$ is not the $\lfloor\frac{d}{2}\rfloor^\mathrm{th}$ internal node on the path (i.e. if $T \neq N_{N,d}$), then form $T'$ by moving all nodes not on $P_T$ that are adjacent to $n+1$ (including the node at distance $2$, if present) one node further from the closest end of $P_T$. Then $T' \in \mathcal{T}_{N,d}\setminus\mathcal{C}_{N,d}$. Furthermore, the distances between every node originally adjacent to $n+1$ and all other nodes not on $P_T$ decrease by $1$. The path lengths between each of these nodes and $\leq \lfloor\frac{d}{2}\rfloor$ nodes on $P_T$ all increase by $1$, and the path lengths between each of these nodes and $\geq \lceil\frac{d}{2}\rceil+1$ nodes on $P_T$ all decrease by $1$. Thus the sum of the path lengths in $T'$ is less than the sum in $T$, and so by equations \refe{eqn:treeresist} and \refe{eqn:H2resist}, $\mathcal{H}_2\left(T'\right) < \mathcal{H}_2\left(T\right)$.

Hence for every tree in $\mathcal{T}_{N,d}\setminus\mathcal{C}_{N,d}$ other than $N_{N,d}$, there exists another tree in $\mathcal{T}_{N,d}\setminus\mathcal{C}_{N,d}$ with strictly smaller $\mathcal{H}_2$ norm.
\end{proof}

\addtolength{\textheight}{-7.4cm} % This command serves to balance the column lengths
                                  % on the last page of the document manually. It shortens
                                  % the textheight of the last page by a suitable amount.
                                  % This command does not take effect until the next page
                                  % so it should come on the page before the last. Make
                                  % sure that you do not shorten the textheight too much

\begin{lemma}\label{lem:PvN}
Suppose that $N \geq 7$ and $4 \leq d \leq N-3$. Then $\mathcal{H}_2\left(P_{N,d,\lfloor\frac{d}{2}\rfloor}\right) < \mathcal{H}_2\left(N_{N,d}\right)$.
\end{lemma}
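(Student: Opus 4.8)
The plan is to recognize that $N_{N,d}$ is obtained from $P_{N,d,\lfloor\frac{d}{2}\rfloor}$ by exactly the kind of single-leaf relocation used in Lemmas \ref{lem:DNpq} and \ref{lem:Trlk}, and then to compute the induced change in the Kirchhoff index directly from \refe{eqn:treeresist}. Both trees share the same longest path $P$ (of length $d$) and the same central node $c$, the $\lfloor\frac{d}{2}\rfloor$-th internal node. In $P_{N,d,\lfloor\frac{d}{2}\rfloor}$ the node $c$ carries a bouquet of $N-d-1$ leaves, whereas in $N_{N,d}$ the node $c$ carries $N-d-2$ bouquet neighbors, exactly one of which (call it $v$) has a further leaf attached at distance $2$. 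Labelling that distance-$2$ leaf as node $1$ and the remaining nodes as $2$ through $N$, the tree $N_{N,d}$ is precisely the tree obtained from $P_{N,d,\lfloor\frac{d}{2}\rfloor}$ by detaching node $1$ from $c$ and reattaching it to the sibling leaf $v$, with every other node unchanged.

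First I would invoke \refe{eqn:treeresist} to reduce the comparison to the distances involving node $1$, writing $K_f(N_{N,d}) - K_f(P_{N,d,\lfloor\frac{d}{2}\rfloor}) = \left(\sum_{j=2}^N d_{1,j}\right)_{N_{N,d}} - \left(\sum_{j=2}^N d_{1,j}\right)_{P_{N,d,\lfloor\frac{d}{2}\rfloor}}$, since all other pairwise distances are identical. Then I would split the remaining nodes into three groups and tabulate $d_{1,j}$ in each tree: (i) the $d+1$ nodes of $P$, whose distance to node $1$ is $1+|k-m|$ in $P_{N,d,\lfloor\frac{d}{2}\rfloor}$ and $2+|k-m|$ in $N_{N,d}$ (where $m$ indexes $c$), so each increases by exactly $1$; (ii) the sibling $v$, whose distance drops from $2$ to $1$; and (iii) the remaining $N-d-3$ bouquet leaves, whose distance rises from $2$ to $3$. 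A pleasant feature is that the location of $c$ along the path is irrelevant here, since the shift of $+1$ applied to every path node is uniform in $k$.

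Summing the three contributions gives $(d+1) + (-1) + (N-d-3) = N-3$, so that $K_f(N_{N,d}) - K_f(P_{N,d,\lfloor\frac{d}{2}\rfloor}) = N-3 > 0$ for $N \geq 7$ (indeed for all $N \geq 4$). Since \refe{eqn:H2resist} makes $\mathcal{H}_2$ a strictly increasing function of $K_f$ at fixed $N$, this immediately yields $\mathcal{H}_2\left(P_{N,d,\lfloor\frac{d}{2}\rfloor}\right) < \mathcal{H}_2\left(N_{N,d}\right)$.

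There is no genuine analytic obstacle; the only care needed is the bookkeeping in group (iii). One must use the hypothesis $d \leq N-3$ to ensure that the central bouquet of $P_{N,d,\lfloor\frac{d}{2}\rfloor}$ contains at least two leaves, so that both node $1$ and a sibling $v$ exist (i.e. $N-d-1 \geq 2$), and that the count $N-d-3$ of remaining siblings is nonnegative (consistent with the remark in Figure \ref{fig:Nnd} that $N-d-3$ may be $0$). It is also worth noting explicitly that these are the only distances that change, which is immediate because node $1$ is a leaf in both trees and hence lies on no path between two other nodes.
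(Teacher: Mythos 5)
Your proposal is correct and follows essentially the same route as the paper: both reduce the comparison via \refe{eqn:treeresist} to the distances from the single relocated leaf, observe that exactly one such distance changes in the favorable direction while the other $N-2$ change the opposite way, and conclude that the Kirchhoff index differs by $N-3>0$. The only cosmetic differences are the direction of the move (you go from $P_{N,d,\lfloor\frac{d}{2}\rfloor}$ to $N_{N,d}$, the paper the reverse) and your more itemized three-group bookkeeping, which makes the count slightly more explicit but adds nothing essential.
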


\begin{proof}
Label the node in $N_{N,d}$ that is a distance $2$ from the lost path as node $1$, and label the node it is adjacent to as node $2$. Then we can form $P_{N,d,\lfloor\frac{d}{2}\rfloor}$ from $N_{N,d}$ by moving node $1$ one node closer to the longest path. Then $d_{1,j}$ decreases by $1$ for $j = 3, \dotsc,N$, and $d_{1,2}$ increases by $1$. Since $N \geq 7$, the sum of all path lengths in $P_{N,d,\lfloor\frac{d}{2}\rfloor}$ is less than in $N_{N,d}$. Thus, by equations \refe{eqn:treeresist} and \refe{eqn:H2resist}, $\mathcal{H}_2\left(P_{N,d,\lfloor\frac{d}{2}\rfloor}\right) < \mathcal{H}_2\left(N_{N,d}\right)$.
\end{proof}

Now, we have enough to determine the tree in $\mathcal{T}_{N,d}$ with smallest $\mathcal{H}_2$ norm. 
\begin{theorem}\label{thm:P}
Let $N \geq 4$ and $2 \leq d \leq N-2$. The tree in $\mathcal{T}_{N,d}$ with smallest $\mathcal{H}_2$ norm is $P_{N,d,\lfloor\frac{d}{2}\rfloor}$.
\end{theorem}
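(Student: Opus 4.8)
The plan is to prove the theorem by a case analysis on the diameter $d$, invoking in each case the appropriate result already established. For a fixed $N \geq 4$, the admissible diameters $2 \leq d \leq N-2$ split naturally into four regimes: $d = 2$, $d = 3$, the interior range $4 \leq d \leq N-3$, and $d = N-2$. The extreme cases follow immediately from earlier results. When $d = 2$ the set $\mathcal{T}_{N,2}$ consists only of the star $K_{1,N-1}$, which is precisely $P_{N,2,1} = P_{N,2,\lfloor 2/2 \rfloor}$. When $d = 3$, Theorem \ref{thm:d3} identifies the minimizer as $D_{N,1,N-3}$, and I would note that this double palm tree coincides with the caterpillar $P_{N,3,1} = P_{N,3,\lfloor 3/2 \rfloor}$. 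When $d = N-2$, Theorem \ref{thm:dN-2} gives the minimizer directly as $P_{N,N-2,\lfloor\frac{N-2}{2}\rfloor}$. In each of these boundary cases the claimed tree $P_{N,d,\lfloor d/2 \rfloor}$ is exactly the stated minimizer.

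The substance lies in the interior range $4 \leq d \leq N-3$, which forces $N \geq 7$ and so lets Lemmas \ref{lem:cat}, \ref{lem:noncat}, and \ref{lem:PvN} all apply. Here I would partition $\mathcal{T}_{N,d}$ into the caterpillars $\mathcal{C}_{N,d}$ and the non-caterpillars $\mathcal{T}_{N,d} \setminus \mathcal{C}_{N,d}$. For a caterpillar $T$, Lemma \ref{lem:cat} gives $\mathcal{H}_2(T) \geq \mathcal{H}_2(P_{N,d,\lfloor d/2 \rfloor})$ with equality only at $P_{N,d,\lfloor d/2 \rfloor}$ itself. For a non-caterpillar $T$, Lemma \ref{lem:noncat} gives $\mathcal{H}_2(T) \geq \mathcal{H}_2(N_{N,d})$, and Lemma \ref{lem:PvN} then supplies the strict inequality $\mathcal{H}_2(N_{N,d}) > \mathcal{H}_2(P_{N,d,\lfloor d/2 \rfloor})$. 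Chaining these shows that every tree in $\mathcal{T}_{N,d}$ has $\mathcal{H}_2$ norm at least that of $P_{N,d,\lfloor d/2 \rfloor}$, with equality only for $P_{N,d,\lfloor d/2 \rfloor}$, which completes the interior case and hence the theorem.

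The only place requiring genuine care is verifying that the four regimes exhaust $2 \leq d \leq N-2$ for every $N \geq 4$ and that the named minimizers in the boundary cases really are instances of the single family $P_{N,d,\lfloor d/2 \rfloor}$. Concretely, one must check the identifications $K_{1,N-1} = P_{N,2,1}$ and $D_{N,1,N-3} = P_{N,3,1}$, and confirm that for small $N$ (namely $N = 4, 5, 6$) the interior range is empty, so that only the boundary results are needed. Since all the difficult estimates are already carried out in the preceding lemmas, this theorem is essentially a synthesis of them; the expected obstacle is therefore not analytic but combinatorial, namely ensuring the case coverage has no gaps at the junctions $d = 3$ / $d = 4$ and $d = N-3$ / $d = N-2$ and that the boundary minimizers align with the uniform description.
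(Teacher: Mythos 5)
Your proposal is correct and follows essentially the same route as the paper: the same four-regime case split on $d$, with the boundary cases handled by the identifications $K_{1,N-1}=P_{N,2,1}$ and $D_{N,1,N-3}=P_{N,3,1}$ together with Theorems \ref{thm:d3} and \ref{thm:dN-2}, and the interior range $4 \leq d \leq N-3$ settled by chaining Lemmas \ref{lem:cat}, \ref{lem:noncat} and \ref{lem:PvN}. Your version simply spells out the chaining that the paper leaves as ``a simple consequence.''
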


\begin{proof}
For $d=2$, $\mathcal{T}_{N,d}$ only contains $K_{1,N-1}$, which is the same as $P_{N,2,1}$. For $d=3$, the result follows from Theorem \ref{thm:d3} since $D_{N,1,N-3} = P_{N,3,1}$. For $4 \leq d \leq N-3$, this is a simple consequence of Lemmas \ref{lem:cat}, \ref{lem:noncat} and \ref{lem:PvN}. For $d = N-2$, the result follows from Theorem \ref{thm:dN-2}.
\end{proof}

Finally, we can combine several of our earlier results to obtain a basic comparison between trees of different diameters. 

\begin{theorem}\label{thm:d-1}
Let $3 \leq d \leq N-1$. For any tree in $\mathcal{T}_{N,d}$, there is a tree in $\mathcal{T}_{N,d-1}$ with a smaller $\mathcal{H}_2$ norm. Hence, the star $K_{1,N-1}$ has the smallest $\mathcal{H}_2$ norm of any tree with $N$ nodes.
\end{theorem}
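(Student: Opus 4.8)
The plan is to reduce everything to a comparison between the two diameter-minimizing caterpillars supplied by Theorem \ref{thm:P}. For $3 \le d \le N-2$ that theorem gives $\mathcal{H}_2(T) \ge \mathcal{H}_2\left(P_{N,d,\lfloor\frac{d}{2}\rfloor}\right)$ for every $T \in \mathcal{T}_{N,d}$, so it suffices to exhibit one tree of diameter $d-1$ whose norm lies strictly below $\mathcal{H}_2\left(P_{N,d,\lfloor\frac{d}{2}\rfloor}\right)$; the natural candidate is $P_{N,d-1,\lfloor\frac{d-1}{2}\rfloor}$. I would obtain it from $P_{N,d,\lfloor\frac{d}{2}\rfloor}$ by a single leaf relocation: writing the longest path as $v_0,v_1,\dots,v_d$ with the central bouquet at $v_c$, $c=\lfloor\frac{d}{2}\rfloor$, I detach the endpoint leaf on the \emph{longer} arm ($v_0$ when $d$ is even, $v_d$ when $d$ is odd) and reattach it into the central bouquet. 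A short check of the arm lengths shows the resulting tree has diameter exactly $d-1$ and is precisely $P_{N,d-1,\lfloor\frac{d-1}{2}\rfloor}$ — the parity of $d$ being exactly what makes the relocated bouquet land at the new center.

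The core estimate is that this relocation strictly lowers the Kirchhoff/Wiener index, hence $\mathcal{H}_2$, by \refe{eqn:treeresist} and \refe{eqn:H2resist}. Since only the moved leaf $w$ changes its distances, the change in $K_f$ equals the difference of transmissions $\sum_u d_{v_c,u}-\sum_u d_{v_{\mathrm{old}},u}$ of the new and old attachment nodes, computed in the tree with $w$ deleted. Along the path this difference telescopes into a sum of increments, each equal to the signed imbalance of node counts across one edge $(v_j,v_{j+1})$; because the large central bouquet makes $v_c$ strictly more central than the old endpoint-neighbor, the attachment point migrates toward the centroid and the total is strictly negative — explicitly $(c-1)(c-N+1)<0$ when $d$ is even, with the analogous strictly negative value when $d$ is odd. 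This yields $\mathcal{H}_2\left(P_{N,d-1,\lfloor\frac{d-1}{2}\rfloor}\right)<\mathcal{H}_2\left(P_{N,d,\lfloor\frac{d}{2}\rfloor}\right)$, and combined with Theorem \ref{thm:P} it produces a strictly smaller tree in $\mathcal{T}_{N,d-1}$ for every $T\in\mathcal{T}_{N,d}$.

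The two remaining pieces are routine. For $d=N-1$ the class $\mathcal{T}_{N,N-1}$ contains only $P_N$, and Theorem \ref{thm:path} already identifies the path as the unique maximizer, so any tree of diameter $N-2$ (for instance $P_{N,N-2,\lfloor\frac{N-2}{2}\rfloor}$) has smaller norm; this handles the largest diameter without needing a bouquet to move. For the final sentence I iterate: starting from any non-star tree (diameter $d\ge 3$), repeatedly pass to a strictly smaller tree of one-less diameter until the diameter is $2$, where $\mathcal{T}_{N,2}=\{K_{1,N-1}\}$. The resulting chain of strict inequalities shows $\mathcal{H}_2\left(K_{1,N-1}\right)$ is below the norm of every other tree on $N$ nodes.

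I expect the main obstacle to be the bookkeeping in the leaf-relocation step: correctly choosing which endpoint to move so that the diameter drops by exactly one and the bouquet remains centered, and pinning down the sign of the telescoped transmission difference (a careless choice of endpoint gives a degenerate or non-negative value, as happens for instance if one tries to move $v_0$ when $d=3$). Everything else is a direct application of the already-established lemmas and of Theorems \ref{thm:P} and \ref{thm:path}.
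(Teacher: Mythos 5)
Your proposal is correct and follows essentially the same route as the paper: reduce via Theorem \ref{thm:P} to the single comparison $\mathcal{H}_2\left(P_{N,d-1,\lfloor\frac{d-1}{2}\rfloor}\right) < \mathcal{H}_2\left(P_{N,d,\lfloor\frac{d}{2}\rfloor}\right)$, handle $d=N-1$ separately, and iterate down to the star. The only real difference is that your leaf-relocation/telescoping computation re-derives a special case of Lemma \ref{lem:Trlk}: a bouquet leaf is a one-node path attached to the central node, so your move is exactly the transition $T^r_{0,k+1} \to T^r_{1,k}$ with $T$ the bouquet plus the shorter arm rooted at the center, and the paper simply cites that lemma here (and also uses it for the $d=N-1$ case, where you instead invoke Theorem \ref{thm:path}).
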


\begin{proof}
By Lemma \ref{lem:Trlk}, $\mathcal{H}_2\left(P_{N,N-2,i}\right) < \mathcal{H}_2\left(P_N\right)$ (for any $1 \leq i \leq \lfloor\frac{N-2}{2}\rfloor$).

Let $4 \leq d \leq N-2$. Suppose $T \in \mathcal{T}_{N,d}$. Then by Theorem \ref{thm:P}, $\mathcal{H}_2(T) \geq \mathcal{H}_2\left(P_{N,d,\lfloor\frac{d}{2}\rfloor}\right)$. But by Lemma \ref{lem:Trlk}, $\mathcal{H}_2\left(P_{N,d,\lfloor\frac{d}{2}\rfloor}\right) > \mathcal{H}_2\left(P_{N,d-1,\lfloor\frac{d-1}{2}\rfloor}\right)$. Thus $\mathcal{H}_2\left(P_{N,d-1,\lfloor\frac{d-1}{2}\rfloor}\right) < \mathcal{H}_2(T)$.

Let $T \in \mathcal{T}_{N,3}$. Then by Theorem \ref{thm:d3}, $\mathcal{H}_2(T) \geq \mathcal{H}_2\left(D_{N,1,N-3}\right)$. But by Lemma \ref{lem:Trlk}, $\mathcal{H}_2\left(K_{1,N-1}\right) < \mathcal{H}_2\left(D_{N,1,N-3}\right)$. Thus $\mathcal{H}_2\left(K_{1,N-1}\right) < \mathcal{H}_2(T)$.
\end{proof}

%%%%%%%%%%%%%%%%%%%%%%%%%%%%%%%%%%%%%%%%%%%%%%%%%%%%%%%%%%%%%%%%%%%%%%%%%%%%
\section{DISCUSSION}\label{sec:disc}

We were able to derive the results in Section \ref{sec:manipulate} by determining the effect that moving leaves has on effective resistances. With our well-defined definition of ``directed resistance'' for directed graphs, the same calculations can be made for directed trees as well. Thus our approach in this paper provides a constructive method for deriving a partial ordering of directed trees according to their $\mathcal{H}_2$ norms. Further details will appear in a future publication. Previous known results on the Wiener index of trees do not provide the same opportunity for the examination of directed trees.

Additionally, the manipulations we used to prove our results suggest how trees can be rearranged to improve their $\mathcal{H}_2$ norms in a decentralized fashion. In particular, we showed that for a non-star tree, the $\mathcal{H}_2$ norm can always be reduced either by moving a single node to somewhere else in the tree, or by moving a bouquet of nodes to an adjacent node.  These manipulations are ``local'' in the sense that nodes are moved only from a single location in the tree at a time, and the rest of the nodes in the tree are not required to take any additional action. The nodes that are chosen to be moved are always as far as possible from the center of a longest path.

Consider a large tree connecting many nodes, each of which has only local information about the tree. For example, suppose each node knows the graph between it and a fixed number of other nodes, as well as the degrees of each of these nodes. If the local neighborhood of node $i$ connects to the rest of the tree through a single other node $j$, and node $i$ is a leaf furthest from node $j$ within its local neighborhood, then $i$ is a candidate to be moved one node closer to $j$. Once such nodes identify themselves, they could move, form a new tree, and repeat the process.

A similar procedure could also be applied to situations in which there are constraints on the tree, such as maximum connectivities of nodes, or certain paths/structures that cannot be changed. 

Lemmas \ref{lem:cat}, \ref{lem:noncat} and \ref{lem:PvN} also provide insight into additional ordering within $\mathcal{T}_{N,d}$ that we have not presented here. A complete ordering of general trees is under investigation.

%%%%%%%%%%%%%%%%%%%%%%%%%%%%%%%%%%%%%%%%%%%%%%%%%%%%%%%%%%%%%%%%%%%%%%%%%%%%

\bibliographystyle{IEEEtran} 
\bibliography{IEEEabrv,H2_Robustness}

\end{document}